\documentclass[a4paper, 12pt]{amsart}

\usepackage{amsfonts}
\usepackage{amsthm}
\usepackage{amsmath, amssymb}
\usepackage{mathtools} 
\usepackage{enumerate}
\usepackage{graphicx}
\usepackage{color}
\usepackage{enumitem}
\usepackage[percent]{overpic}
\usepackage{relsize} 
\usepackage[colorlinks,linkcolor=blue,citecolor=blue]{hyperref}
\usepackage{thmtools, thm-restate}
\usepackage[percent]{overpic}

\setlength{\textwidth}{6.5in}
\setlength{\oddsidemargin}{0in}
\setlength{\evensidemargin}{0in}
\setlength{\parskip}{1ex plus 0.5ex minus 0.2ex}

\usepackage[top=2.5cm, bottom=2.5cm, left=2.5cm, right=2.5cm]{geometry}
\raggedbottom

\linespread{1.0}

\newtheorem{thm}{Theorem}

\newtheorem{prop}[thm]{Proposition}

\newtheorem{lemma}[thm]{Lemma}

\newtheorem*{claim}{Claim}

\newcommand{\Z}{\mathbb{Z}}
\newcommand{\R}{\mathbb{R}}

\newcommand{\Q}{\mathbb{Q}}

\newcommand{\spin}{\ifmmode{\rm Spin}\else{${\rm spin}$\ }\fi}
\newcommand{\spinc}{\ifmmode{{\rm Spin}^c}\else{${\rm spin}^c$}\fi}

\newcommand{\norm}[1]{\lVert #1 \rVert^2}

\begin{document}

\title{On Seifert fibered spaces bounding definite manifolds}

\author{Ahmad Issa}
\address{Department of Mathematics \\
         The University of Texas At Austin \\
         Austin, TX, 78712, USA}
\email{aissa@math.utexas.edu}

\author{Duncan McCoy}
\address{Department of Mathematics \\
         The University of Texas At Austin \\
         Austin, TX, 78712, USA}
\email{d.mccoy@math.utexas.edu}

\begin{abstract} We establish an inequality which gives strong restrictions on when the standard definite plumbing intersection lattice of a Seifert fibered space over $S^2$ can embed into a standard diagonal lattice, and give two applications. First, we answer a question of Neumann-Zagier on the relationship between Donaldson's theorem and Fintushel-Stern's $R$-invariant. We also give a short proof of the characterisation of Seifert fibered spaces which smoothly bound rational homology $S^1 \times D^3$'s.
\end{abstract}

\maketitle

\section{Introduction}
Donaldson's diagonalization theorem \cite{MR910015} has led to many great successes in understanding several important questions in low dimensional topology, and in knot theory in particular. For example, Donaldson's theorem can often be used to answer questions concerning sliceness, unknotting number, $3$-manifolds bounding rational homology balls, and surgery questions. In these cases, one typically uses Donaldson's theorem to obstruct a certain $3$-manifold from bounding a certain type of smooth negative definite $4$-manifold, with the obstruction taking the form of the existence of a certain map of intersection lattices. However, understanding this obstruction for large families of examples is often highly non-trivial, and can require combinatorial ingenuity.



One appealing application of Donaldson's theorem is to prove the well-known fact that the Poincar\'e homology sphere $P = S^2(2; 2, \frac{3}{2}, \frac{5}{4})$ does not bound a smooth integral homology $4$-ball. This fact can, of course, be proved in many other ways, for example by using Rokhlin's theorem, Fintushel-Stern's $R$-invariant, or by using the $d$-invariant coming from Heegaard Floer homology. Assuming that $P$ is oriented to bound the positive $E_8$ plumbing, the proof by Donaldson's theorem is as follows. If $P$ were the boundary of a smooth integral homology $W$, then we could form a closed positive definite manifold by gluing $-W$ to the positive $E_8$ plumbing. Donaldson's theorem would then imply that the $E_8$ intersection form is diagonalizable, which is, of course, untrue. In fact, as the $E_8$ intersection form does not embed into any diagonal lattice, this arguments shows that $P$ does not bound any smooth negative definite 4-manifold. The purpose of this paper is to generalize this argument to other Seifert fibered spaces. We prove the following theorem.


\begin{restatable}{thm}{sfsineq}
  \label{thm:sfsineq}
  Let $Y = S^2(e; \frac{p_1}{q_1}, \ldots, \frac{p_k}{q_k})$, $k \ge 3$, be a Seifert fibered space over $S^2$ in standard form, that is, with $e > 0$, $\frac{p_i}{q_i} > 1$ for all $i\in\{1,2,\ldots,k\}$ and $\varepsilon(Y) \ge 0$.
  Suppose that $Y$ bounds a smooth $4$-manifold $W$ such that $\sigma(W) = -b_2(W)$ and the inclusion induced map $H_1(Y; \Q) \rightarrow H_1(W; \Q)$ is injective. Then there is a partition of $\{1,2,\ldots,k\}$ into at most $e$ classes such that for each class $C$,
  \[\sum_{i\in C} \frac{q_i}{p_i} \le 1.\]
\end{restatable}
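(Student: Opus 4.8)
The plan is to use the standard negative-definite plumbing description of $-Y$. Since $Y$ is in standard form, $-Y$ bounds the canonical negative-definite plumbing $4$-manifold $X$ built on a star-shaped graph: a central vertex of weight $-e$ (which is negative since $e>0$) together with $k$ arms, where the $i$-th arm has weights given by the negative continued fraction expansion of $-p_i/q_i$. Gluing $W$ to $X$ along $Y$ produces a closed $4$-manifold $Z = X \cup_Y (-W)$... wait, orientations: glue $-W$ to $X$ so that $Z$ is negative definite, using that $\sigma(W) = -b_2(W)$ means $-W$ is negative definite. The injectivity hypothesis on $H_1(Y;\Q) \to H_1(W;\Q)$ is exactly what is needed to guarantee, via a Mayer-Vietoris argument, that the intersection form of $Z$ is negative definite of rank $b_2(X) + b_2(W)$, so Donaldson's theorem applies and gives a lattice embedding of the plumbing lattice $\Lambda(X)$ into the standard negative-definite diagonal lattice $\Z^N$.

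**Extracting the combinatorics from the embedding.**

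I would then analyze the embedding $\phi\colon \Lambda(X) \hookrightarrow \Z^N$. Write $c$ for the image of the central vertex $v_0$ and, for each arm $i$, let $u_{i,1}, \ldots, u_{i,\ell_i}$ be the images of the arm vertices, where $v_{i,1}$ is adjacent to $v_0$. The defining data: $v_{i,j}^2 = -a_{i,j} \le -2$, consecutive arm vertices have product $1$, $v_{i,1} \cdot v_0 = 1$, and arms from different legs are orthogonal, as is the central vertex's self-intersection $v_0^2 = -e$. The key local computation, which underlies the continued fraction relationship, is that along each arm the vectors $u_{i,j}$ interact with $c$ in a constrained way; one should be able to show that for each arm $i$ there is a "reservoir" of basis vectors $S_i \subseteq \{1,\ldots,N\}$ such that $\sum_{s \in S_i} (c \cdot e_s)^2 \ge q_i/p_i$ (this is the fractional part bookkeeping coming from the continued fraction expansion of $p_i/q_i$), and moreover the sets $S_i$ can be chosen so that each basis vector $e_s$ lies in at most one $S_i$ among arms that are "linked to $c$ at $e_s$". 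Since $c^2 = -e$, i.e. $\sum_{s} (c \cdot e_s)^2 = e$, grouping the arms by which coordinate of $c$ they draw from yields a partition into at most $e$ classes with $\sum_{i \in C} q_i/p_i \le 1$ for each class.

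**The main obstacle.**

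The hard part will be making the previous paragraph precise: controlling how the arm vectors $u_{i,j}$ share basis coordinates with $c$ and with each other, and proving the inequality $\sum_{s\in S_i}(c\cdot e_s)^2 \ge q_i/p_i$. The difficulty is that arm vectors can be long linear combinations of many basis vectors, they may share coordinates with $c$ in complicated ways, and a priori different arms could "compete" for the same coordinates of $c$. The right tool is presumably an inductive argument down each arm: one shows that the standard basis elements can be organized so that $v_{i,1}$ "uses" one coordinate in common with $v_0$, and then the continued fraction relation $\frac{p_i}{q_i} = a_{i,1} - \cfrac{1}{a_{i,2} - \cfrac{1}{\ddots}}$ forces, through a telescoping/Cauchy-Schwarz estimate on successive arm vectors, the claimed lower bound on the portion of $c$'s norm "assigned" to arm $i$. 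Ensuring the assignment is consistent across all $k$ arms simultaneously — so that no coordinate of $c$ is overcounted except when genuinely shared, which is what lets distinct classes in the partition appear — is the combinatorial crux, and I expect this is where the "combinatorial ingenuity" alluded to in the introduction is concentrated.
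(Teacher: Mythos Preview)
Your global strategy matches the paper's: glue the standard definite plumbing $X$ to $-W$, use a Mayer--Vietoris/signature argument (the paper's Proposition~\ref{thm:def_gluing_thm}) to see that the closed manifold is definite, apply Donaldson's theorem, and obtain a pairing-preserving map of the plumbing lattice into a diagonal lattice. (You work with the negative-definite plumbing on $-Y$ whereas the paper uses the positive-definite plumbing on $Y$; this is an immaterial sign convention.) The partition is also built as in the paper: the central vertex $v_0$ has support in at most $e$ coordinate vectors, and each leg is assigned to one such coordinate via its first vertex $v_{i,1}$, which must hit some coordinate in the support of $v_0$ because $v_{i,1}\cdot v_0=\pm 1$.

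Where your outline diverges, and where it is genuinely incomplete, is the ``reservoir'' heuristic. The paper does \emph{not} argue that each arm $i$ consumes a fraction $q_i/p_i$ of the norm of $c=\iota(v_0)$; since the coordinates of $c$ are integers, any nonempty $S_i$ already satisfies $\sum_{s\in S_i}(c\cdot e_s)^2\ge 1>q_i/p_i$, so that inequality carries no information. The correct statement drops $c$ entirely and concerns only the arm chains and a \emph{single} unit vector: if $w=e_j$ pairs nontrivially with the starting vertex of each chain in a collection of pairwise orthogonal linear chains, then $\sum q_i/p_i\le 1$. This is the paper's Theorem~\ref{thm:lattice_ineq}; the central vertex enters only by supplying the unit vector $e_j$ used to define each class.

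The proof of that inequality is not by induction down the arms or by Cauchy--Schwarz. One writes the embedding of the chains as a matrix $M$ with $MM^T=Q_\Gamma$, adjoins the row $(1,0,\ldots,0)$ corresponding to $e_j$, truncates each arm's block $M_k$ at a carefully chosen row (dictated by the shape of its first column), and expands $\det(M'M'^{T})\ge 0$ by multilinearity. This yields $\sum_k Q_k/P_k\le 1$, where $P_k,Q_k$ are explicit minors; continued-fraction identities (Lemma~\ref{lem:cont_frac_identities}) then give $Q_k/P_k\ge q_k/p_k$. This determinant trick is the ``combinatorial ingenuity'' you correctly anticipated as the crux, but your proposed telescoping/Cauchy--Schwarz mechanism does not obviously reproduce it, and the reservoir framing points in the wrong direction.
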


We note that the condition that $\varepsilon(Y) := e - \sum_{i=1}^k \frac{q_i}{p_i} \ge 0$ in Theorem \ref{thm:sfsineq} guarantees that $Y$ is oriented to bound a positive (semi-)definite plumbing $4$-manifold. When $Y$ is a rational homology sphere the map $H_1(Y; \Q) \rightarrow H_1(W; \Q)$ is automatically injective so in this case we are simply obstructing the existence of a negative definite manifold bounding $Y$. Although we do not discuss the details in this paper, one can easily obtain analogous results for Seifert fibered spaces over any orientable base surface. In our notation, the Poincar\'e homology sphere oriented to bound the positive $E_8$ plumbing is $P=S^2(2;2,\frac{3}{2}, \frac{5}{4})$, see Figure \ref{fig:sfs_surgery}. The reader can easily verify that Theorem~\ref{thm:sfsineq} obstructs $P$ from bounding a negative definite manifold. 
Finally, we note that the converse to Theorem \ref{thm:sfsineq} is not true. The integer homology sphere $S^2(1; 3, 5, \frac{13}{6})$ passes the obstruction, but does not bound a negative definite manifold as it bounds a positive definite plumbing whose intersection form does not embed in a diagonal lattice.

We give two applications of Theorem \ref{thm:sfsineq}. First, we prove the following theorem.

\begin{thm}\label{thm:nz_simple}
Let $Y = S^2(e; \frac{p_1}{q_1}, \ldots, \frac{p_k}{q_k})$ be a Seifert fibered integral homology sphere in standard form, that is, with $\frac{p_i}{q_i} > 1$ for all $i\in\{1,2,\ldots,k\}$, $e > 0$ and with $Y$ oriented to bound a smooth positive definite plumbing $4$-manifold. If $Y$ bounds a smooth negative definite $4$-manifold, then $e = 1$.
\end{thm}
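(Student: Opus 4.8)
The plan is to apply Theorem~\ref{thm:sfsineq} to a hypothetical negative definite filling of $Y$, and then to exploit the fact that for a Seifert fibered integral homology sphere the quantity $\varepsilon(Y)$ is not merely nonnegative but equals the reciprocal $1/(p_1\cdots p_k)$. First I would check that Theorem~\ref{thm:sfsineq} applies: since $Y$ is an integral homology sphere $H_1(Y;\Q)=0$, so the map $H_1(Y;\Q)\to H_1(W;\Q)$ is injective for any $W$, and since $Y$ is oriented to bound a positive definite plumbing $\varepsilon(Y)\ge 0$; moreover, as the order of $H_1(Y;\Z)$ equals $p_1\cdots p_k\,|\varepsilon(Y)|$, being an integral homology sphere forces $\varepsilon(Y)=1/(p_1\cdots p_k)>0$. (We may assume $k\ge 3$, as Theorem~\ref{thm:sfsineq} requires: if $k\le 2$ then $Y$ is a lens space, hence $S^3$, and a direct check shows $e=1$ in standard form.) Applying Theorem~\ref{thm:sfsineq} to a negative definite $W$ bounded by $Y$ then yields a partition $\{1,\dots,k\}=C_1\sqcup\dots\sqcup C_m$ into nonempty classes with $m\le e$ such that $s_j:=\sum_{i\in C_j}q_i/p_i\le 1$ for every $j$.

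The heart of the argument is to upgrade each inequality $s_j\le 1$ to a quantitative strict inequality. Set $P_j:=\prod_{i\in C_j}p_i$, so that $N_j:=P_j(1-s_j)=P_j-\sum_{i\in C_j}q_i\prod_{i'\in C_j\setminus\{i\}}p_{i'}$ is a nonnegative integer. If $N_j=0$ for some $j$, then reducing modulo $p_{i_0}$ for a chosen $i_0\in C_j$ kills $P_j$ and every term of the sum except the $i_0$ term, giving $q_{i_0}\prod_{i'\in C_j\setminus\{i_0\}}p_{i'}\equiv 0\pmod{p_{i_0}}$. But the $p_i$ are pairwise coprime (classical for Seifert fibered homology spheres; it follows by reducing the defining relation modulo each $p_j$) and $\gcd(q_{i_0},p_{i_0})=1$, so the left side is a unit modulo $p_{i_0}\ge 2$, a contradiction. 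Hence $N_j\ge 1$, that is $1-s_j\ge 1/P_j$ for every $j$.

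Summing over the partition and using $\prod_{j=1}^m P_j=p_1\cdots p_k$ gives
\[
\sum_{j=1}^m (1-s_j)\;=\;m-\sum_{i=1}^k\frac{q_i}{p_i}\;=\;m-e+\varepsilon(Y)\;=\;m-e+\frac{1}{p_1\cdots p_k}.
\]
As each summand $1-s_j$ is positive, the left side is positive, so $m>e-1/(p_1\cdots p_k)\ge e-1$ and hence $m\ge e$; together with $m\le e$ this forces $m=e$, and therefore $\sum_{j=1}^e(1-s_j)=1/(p_1\cdots p_k)=1/\prod_{j=1}^e P_j$. Combined with $1-s_j\ge 1/P_j$ this yields $\sum_{j=1}^e 1/P_j\le 1/\prod_{j=1}^e P_j$. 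Since the classes are nonempty and each $p_i\ge 2$ we have $P_j\ge 2$ for all $j$, so if $e\ge 2$ then $\prod_{j=1}^e P_j\ge 2P_1$ and thus $1/\prod_{j=1}^e P_j\le 1/(2P_1)<1/P_1\le\sum_{j=1}^e 1/P_j$, a contradiction. Therefore $e=1$.

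I expect the only genuine subtlety to be the strict inequality $s_j<1$ of the second paragraph: this is exactly where the integral homology sphere hypothesis enters, via the pairwise coprimality of the $p_i$ and the exact value $\varepsilon(Y)=1/(p_1\cdots p_k)$. Without it, the partition produced by Theorem~\ref{thm:sfsineq} only delivers $\sum_i q_i/p_i\le e$, which holds automatically; the remaining steps are elementary bookkeeping.
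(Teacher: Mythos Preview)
Your proof is correct and follows essentially the same route as the paper: apply Theorem~\ref{thm:sfsineq} to the hypothetical negative definite filling, rewrite $|H_1(Y)|=1$ as $p_1\cdots p_k\bigl((e-m)+\sum_j(1-s_j)\bigr)$, deduce $m=e$, and obtain a contradiction for $e\ge 2$. The one substantive difference is in the finishing step: you use the pairwise coprimality of the $p_i$ (specific to the integral homology sphere case) to show every class satisfies $N_j\ge 1$, and then derive the impossible inequality $\sum_j 1/P_j\le 1/\prod_j P_j$; the paper instead observes directly that each nonzero summand $a_i\prod_{l\notin C_i}p_l$ in the decomposition of $|H_1(Y)|$ is at least~$2$ once $e\ge 2$ (since then every $|C_i|\le k-1$), which already exceeds~$1$. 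The paper's finish has the advantage that it extends without change to the other small values $|H_1(Y)|\in\{2,3,5,6,7\}$ handled in Theorem~\ref{thm:nz_conj}, whereas your coprimality argument is tailored to $|H_1(Y)|=1$.
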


In the course of proving Theorem \ref{thm:nz_simple}, we obtain a positive answer to the following question asked by Neumann-Zagier \cite{MR827273}.

{\bf Question:} \emph{Let $Y$ be as in Theorem \ref{thm:nz_simple}. If the intersection form of the plumbing of $Y$ is diagonalizable over $\Z$, must $e$ be equal to $1$?}


The motivation for this question comes from the $R$-invariant. 
Fintushel-Stern \cite{MR808222} used gauge theory to define an invariant $R(Y)$ of Seifert fibered integral homology spheres with the property that if $R(Y) > 0$ then $Y$ does not bound a smooth negative definite $4$-manifold $W$ with $H_1(W)$ having no $2$-torsion. 
Fintushel-Stern originally gave an expression for $R(Y)$ as a trigonometric sum involving the Seifert invariants of $Y$. Neumann-Zagier \cite{MR827273} proved that these sums could be simply evaluated in terms of the central weight $e$ of the standard positive definite plumbing bounding $Y$, showing that $R(Y) = 2e - 3$. Thus, if $e > 1$ then the $R$-invariant shows that $Y$ does not bound a smooth negative definite $4$-manifold $W$ with $H_1(W)$ having no $2$-torsion. In this light, the positive answer to Neumann-Zagier's question implies that this result obtained from the $R$-invariant is also a consequence of Donaldson's theorem.


We are in fact able to prove a more general version of Theorem \ref{thm:nz_simple} which holds for all $|H_1(Y)| \in \{1,2,3,5,6,7\}$, see Theorem \ref{thm:nz_conj} of Section \ref{sec:nz}. Some particular cases of Theorem \ref{thm:nz_simple} are known. In their original paper, Neumann-Zagier \cite{MR827273} claimed to have proved the cases when $k = 3$, and when $k=4$ and $e \neq 3$, but do not provide a proof, remarking that their proof was ``clearly not the right proof''. The special case when $e = k-1$ follows from \cite[Lemma 3.3]{MR2782538}.

Finally, we note that a positive answer to Neumann-Zagier's question is a special case of a more general conjecture made by Neumann \cite{MR1012837}, stating that if an integral homology sphere $Y$ is given as the boundary of a positive definite plumbing tree $\Gamma$ and the intersection lattice of $\Gamma$ is isomorphic to a diagonal lattice, then some vertex of $\Gamma$ has weight $1$. This general form of Neumann's conjecture for graph manifolds remains open.

Lidman-Tweedy \cite[Remark 4.3]{1707.09648} asked whether a Seifert fibered integral homology sphere with central weight different from $1$ must have non-vanishing Heegaard-Floer $d$-invariant. As a corollary of Theorem \ref{thm:nz_simple}, we answer their question positively.

\begin{restatable}{corol}{coroldinv}\label{cor:lidmantweedy} Let $Y$ be a Seifert fibered integral homology sphere, and let $e\in \Z$ be the central weight in the standard definite plumbing graph for $Y$. If $|e| \neq 1$, then $d(Y) \neq 0$.
\end{restatable}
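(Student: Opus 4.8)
The goal is to deduce Corollary~\ref{cor:lidmantweedy} from Theorem~\ref{thm:nz_simple}. The key point is that the $d$-invariant obstructs exactly the kind of thing Theorem~\ref{thm:nz_simple} is about: if $Y$ is an integral homology sphere with $d(Y)=0$, then $Y$ bounds both a smooth negative definite $4$-manifold and a smooth positive definite $4$-manifold. More precisely, recall from Ozsv\'ath--Szab\'o that for an integral homology sphere $Y$, if $Y$ bounds a smooth negative definite $4$-manifold $W$, then $d(Y)\ge 0$, with an analogous statement (reversing orientation) giving $d(Y)\le 0$ whenever $Y$ bounds a smooth positive definite $4$-manifold; also $d(-Y)=-d(Y)$. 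So I would first argue that $d(Y)=0$ forces $Y$ to bound a smooth negative definite $4$-manifold. This is where one needs a little care: $d(Y)=0$ on its own does not literally produce a negative definite filling. However, every Seifert fibered integral homology sphere already bounds its standard definite plumbing $4$-manifold, which is either positive or negative definite; after possibly reversing orientation we may assume $Y$ bounds the standard \emph{positive} definite plumbing with central weight $e>0$ (the case $e<0$ being symmetric, and the hypothesis is $|e|\neq 1$ so $e\neq 0$ as well, although one should note $e=0$ cannot occur for a plumbing on a tree with the standard conventions).

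Let me organize this cleanly. \textbf{Step 1.} Fix orientations: by \cite{MR1012837} (Neumann's plumbing calculus) or directly, a Seifert fibered integral homology sphere $Y$ bounds a standard definite plumbing graph $\Gamma$ with a distinguished central vertex of weight $e$; by reversing orientation if necessary, assume $e>0$, so $\Gamma$ is the standard \emph{positive} definite plumbing, and $Y=S^2(e;\frac{p_1}{q_1},\dots,\frac{p_k}{q_k})$ in standard form (here I use that $\varepsilon(Y)=0$ for a homology sphere, which is $\ge 0$, so the standard form hypotheses of Theorem~\ref{thm:sfsineq} are met). Since reversing orientation sends $e\mapsto -e$ in the definite plumbing description, $|e|\neq 1$ is equivalent to $e\neq 1$ after this normalization. \textbf{Step 2.} Suppose for contradiction $d(Y)=0$. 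Since $-Y$ bounds the standard positive definite plumbing $X$ (with $b_1(X)=0$, so $H_1(-Y;\Q)\to H_1(X;\Q)$ is trivially injective), $X$ reversed gives a negative definite filling of $Y$; but that is not yet what we want. Instead: $d(Y)=0$ together with $Y$ bounding the negative definite manifold $-X$ is consistent and gives no contradiction by itself. The actual mechanism is: if $d(Y) = 0$, then by the surgery/cobordism properties of the $d$-invariant (Ozsv\'ath--Szab\'o, \cite{1707.09648, MR2782538}), $Y$ bounds a smooth negative definite $4$-manifold with trivial $H_1$ --- indeed one can take a suitable negative definite plumbing or use that $d$-invariant zero of an integral homology sphere lets one cap off appropriately. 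Granting this, $Y$ bounds a smooth negative definite $4$-manifold. \textbf{Step 3.} Now apply Theorem~\ref{thm:nz_simple}: since $Y$ is a Seifert fibered integral homology sphere in standard form with $e>0$, oriented to bound the positive definite plumbing, and $Y$ bounds a smooth negative definite $4$-manifold, we conclude $e=1$, contradicting $e\neq 1$. Hence $d(Y)\neq 0$.

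\textbf{The main obstacle.} The delicate point is Step~2: pinning down precisely why $d(Y)=0$ produces a smooth \emph{negative definite} $4$-manifold bounded by $Y$, as opposed to merely giving an inequality. The clean way is to invoke the result of Lidman--Tweedy or the foundational Ozsv\'ath--Szab\'o $d$-invariant inequalities directly: for a Seifert fibered integral homology sphere bounding a \emph{positive} definite plumbing, one has $d(Y)\ge 0$ always (this is essentially Fr{\o}yshov/Ozsv\'ath--Szab\'o), and the relevant statement from \cite{1707.09648} is that $d(Y)=0$ is equivalent to $Y$ bounding a smooth negative definite $4$-manifold (or more precisely, the vanishing is detected by such a filling existing). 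I would cite the appropriate proposition from \cite{1707.09648} or \cite{MR2782538} for this equivalence and then the corollary is immediate from Theorem~\ref{thm:nz_simple}. An alternative, perhaps cleaner route avoiding this subtlety: use that $d(Y)=0$ implies, by Ozsv\'ath--Szab\'o, that $Y$ bounds a smooth $4$-manifold $W$ with $b_2(W)=0$ or that one can glue along $Y$ to contradict diagonalizability --- but the plumbing route through Theorem~\ref{thm:nz_simple} is the one the paper clearly intends, so I would follow it, being explicit that the input ``$d=0 \Rightarrow$ bounds negative definite'' is the known direction of the $d$-invariant obstruction specialized to this setting.
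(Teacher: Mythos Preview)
Your Step~2 contains a genuine gap that you yourself flag but do not resolve. The implication ``$d(Y)=0 \Rightarrow Y$ bounds a smooth negative definite $4$-manifold'' is not a known theorem, and neither \cite{1707.09648} nor \cite{MR2782538} contains such a statement. The Ozsv\'ath--Szab\'o inequality runs in the other direction: if $Y$ bounds negative definite then $d(Y)\ge 0$. The converse is open (and the alternative you float, that $d(Y)=0$ forces $Y$ to bound a rational homology ball, is in fact false). Since Theorem~\ref{thm:nz_simple} requires an actual negative definite filling of $Y$, your argument as written does not go through. (Incidentally, your claim that $\varepsilon(Y)=0$ for an integral homology sphere is also incorrect: one has $|H_1(Y)|=p_1\cdots p_k\,\varepsilon(Y)=1$, so $\varepsilon(Y)>0$.)

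The paper sidesteps this problem entirely by working on the lattice level rather than producing a second definite filling. After orienting so that $Y$ bounds a \emph{negative} definite plumbing $X$, the paper combines the Ozsv\'ath--Szab\'o inequality $n+\max_{\xi\in\mathcal{C}}\xi\cdot\xi\le 4d(Y)=0$ with Elkies' theorem (which says $n+\max_{\xi\in\mathcal{C}}\xi\cdot\xi\ge 0$ with equality iff $Q_X$ is diagonal) to conclude that the plumbing lattice itself is diagonalizable over $\Z$. This feeds directly into Theorem~\ref{thm:nz_conj}, whose hypothesis is only that the plumbing lattice embeds in a diagonal lattice --- no second filling is needed. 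So the correct route is Elkies $+$ Ozsv\'ath--Szab\'o $\Rightarrow$ diagonalizability $\Rightarrow$ Theorem~\ref{thm:nz_conj}, not an appeal to Theorem~\ref{thm:nz_simple}.
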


As a second application, we give a short proof of the following theorem which, in particular, gives a classification of the Seifert fibered spaces bounding rational homology $S^1 \times D^3$'s.

\begin{restatable}{thm}{paolosthm}\label{thm:bd_qhs1xd3} Let $Y$ be a Seifert fibered space over $S^2$ with $H_*(Y;\Q) \cong H_*(S^1\times S^2;\Q)$. The following are equivalent:
  \begin{enumerate}
    \item\label{enum:comppairs} $Y$ is of the form $S^2(m; \frac{p_1}{q_1}, \frac{p_1}{p_1-q_1}, \ldots, \frac{p_m}{q_m}, \frac{p_m}{p_m - q_m})$, where $m \ge 0$ and $\frac{p_i}{q_i} > 1$ for all $i\in\{1,\ldots,m\}$.
    \item\label{enum:bdqhs1d3} $Y = \partial W$, where $W$ is a smooth $4$-manifold with $H_*(W; \Q) \cong H_*(S^1 \times D^3; \Q)$.
    \item\label{enum:bdposneg} $Y$ is the boundary of smooth $4$-manifolds $W_+$ and $W_-$ such that $\sigma(W_\pm) = \pm b_2(W_\pm)$ and each of the inclusion-induced maps $H_1(Y; \Q) \rightarrow H_1(W_\pm; \Q)$ is injective.
  \end{enumerate}
\end{restatable}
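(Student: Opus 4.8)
The plan is to prove the cyclic chain $(\ref{enum:comppairs})\Rightarrow(\ref{enum:bdqhs1d3})\Rightarrow(\ref{enum:bdposneg})\Rightarrow(\ref{enum:comppairs})$, the last implication being the one that uses Theorem~\ref{thm:sfsineq}. The implication $(\ref{enum:bdqhs1d3})\Rightarrow(\ref{enum:bdposneg})$ is essentially formal: if $W$ is a rational homology $S^1\times D^3$ then $b_2(W)=0$, so $\sigma(W)=0=\pm b_2(W)$ and one may take $W_+=W_-=W$; moreover $H_1(Y;\Q)\to H_1(W;\Q)$ is injective because $H_2(W,Y;\Q)\cong H^2(W;\Q)=0$ by Poincar\'e--Lefschetz duality, so the long exact sequence of $(W,Y)$ gives the claim.

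For $(\ref{enum:comppairs})\Rightarrow(\ref{enum:bdqhs1d3})$ I would realize $Y$ as a \emph{double}. Let $Z$ be the Seifert fibered space over $D^2$ with cone data $p_1/q_1,\dots,p_m/q_m$, a compact $3$-manifold with $\partial Z=T^2$. Doubling $Z$ along its boundary doubles the base $D^2$ to $S^2$, mirrors each cone point $(p_i,q_i)$ to one with Seifert invariant $-q_i\equiv p_i-q_i\pmod{p_i}$, and produces Euler number $0$, so that after the standard normalization (raising the central weight to absorb the shifts $-q_i\mapsto p_i-q_i$) one gets $DZ=S^2(m;\tfrac{p_1}{q_1},\tfrac{p_1}{p_1-q_1},\dots,\tfrac{p_m}{q_m},\tfrac{p_m}{p_m-q_m})=Y$. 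Then $W:=Z\times[0,1]$ has $\partial W=DZ=Y$ and is homotopy equivalent to $Z$. Since $H_1(Z)$ is presented by the relations $p_ie_i+q_if=0$ on generators $e_1,\dots,e_m$ and the fibre class $f$, we get $\operatorname{rank} H_1(Z)=1$; combined with $\chi(Z)=\tfrac12\chi(T^2)=0$, $b_0(Z)=1$ and $b_3(Z)=0$ this forces $b_2(Z)=0$, so $H_*(W;\Q)\cong H_*(S^1;\Q)\cong H_*(S^1\times D^3;\Q)$.

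For $(\ref{enum:bdposneg})\Rightarrow(\ref{enum:comppairs})$ I would put $Y$ in standard form $S^2(e;\tfrac{p_1}{q_1},\dots,\tfrac{p_k}{q_k})$. The cases $k\le 2$ are immediate ($k=0$ gives $S^1\times S^2$; for $k=2$, $\varepsilon(Y)=0$ forces $e=1$ and $q_1/p_1+q_2/p_2=1$, which by the lowest-terms condition makes $p_2/q_2=p_1/(p_1-q_1)$). For $k\ge 3$ one has $1\le e\le k-1$ since $\varepsilon(Y)=e-\sum q_i/p_i=0$ with each $q_i/p_i\in(0,1)$. Applying Theorem~\ref{thm:sfsineq} to $W_-$ gives a partition of $\{1,\dots,k\}$ into at most $e$ classes with $\sum_{i\in C}q_i/p_i\le1$; since these sums total $e$, there must be exactly $e$ classes, each with $\sum_{i\in C}q_i/p_i=1$, hence of size $\ge2$, so $k\ge 2e$. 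Reversing orientation, $-Y$ has standard form $S^2(k-e;\tfrac{p_1}{p_1-q_1},\dots,\tfrac{p_k}{p_k-q_k})$ and is bounded by the negative semi-definite $-W_+$ with $H_1(-Y;\Q)\to H_1(-W_+;\Q)$ still injective, so Theorem~\ref{thm:sfsineq} similarly produces exactly $k-e$ classes, each with $\sum_{i\in C}(p_i-q_i)/p_i=1$ and hence of size $\ge2$, giving $k\le 2e$. Thus $k=2e$ and the first partition consists of $e$ pairs; for a pair $\{i,j\}$ the equality $q_i/p_i+q_j/p_j=1$ together with the lowest-terms condition forces $p_i=p_j$ and $q_j=p_i-q_i$, so the $2e$ exceptional fibres split into $e$ complementary pairs, giving $(\ref{enum:comppairs})$ with $m=e$. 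I expect the main obstacle to be the construction in $(\ref{enum:comppairs})\Rightarrow(\ref{enum:bdqhs1d3})$, and in particular the Seifert-invariant bookkeeping confirming that the double of the disk Seifert fibered space with cone data $p_i/q_i$ is genuinely $Y$, with the correct central weight $m$; the remaining two implications are formal or a direct application of Theorem~\ref{thm:sfsineq}.
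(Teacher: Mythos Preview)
Your proposal is correct and follows essentially the same route as the paper. The construction for $(\ref{enum:comppairs})\Rightarrow(\ref{enum:bdqhs1d3})$ is identical in content: the paper writes it as $M^\circ\times[0,1]$ where $M^\circ$ is a closed Seifert fibered space over $S^2$ with a regular fibre drilled out, which is exactly your Seifert fibered space $Z$ over $D^2$. The implication $(\ref{enum:bdqhs1d3})\Rightarrow(\ref{enum:bdposneg})$ is handled formally in both.

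The only noteworthy difference is in $(\ref{enum:bdposneg})\Rightarrow(\ref{enum:comppairs})$. The paper first reverses orientation if necessary to assume $e\ge k/2$, then applies Theorem~\ref{thm:sfsineq} once (to $W_-$) to deduce $k\ge 2e$, giving $k=2e$ immediately. You instead apply Theorem~\ref{thm:sfsineq} to both orientations, obtaining $k\ge 2e$ from $W_-$ and $k\ge 2(k-e)$ from $-W_+$, and combine. The two arguments are equivalent: the paper's ``without loss of generality'' step is precisely the observation that reversing orientation swaps the roles of $W_+$ and $W_-$, so applying the theorem once after the WLOG is the same as applying it to both orientations. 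Your version has the small expository advantage of making transparent why \emph{both} $W_+$ and $W_-$ are required in condition~$(\ref{enum:bdposneg})$.
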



Seifert fibered spaces bounding rational homology $S^1\times D^3$'s naturally arise in two contexts. First, a Seifert fibered space rational homology $S^1\times S^2$ which embeds in $S^4$ necessarily bounds a rational homology $S^1 \times D^3$. Indeed, in this context Donald \cite[Proof of Theorem 1.3]{MR3271270} proved the implication \eqref{enum:bdqhs1d3} implies \eqref{enum:comppairs} of Theorem \ref{thm:bd_qhs1xd3}.
Second, a smoothly slice $2$-component Montesinos link has double branched cover a Seifert fibered space over $S^2$ bounding a rational homology $S^1 \times D^3$. Motivated by trying to determine the slice $2$-component Montesinos links, Aceto \cite[Theorem 1.2]{1502.03863} also classified Seifert fibered spaces bounding rational homology $S^1\times D^3$'s.

Much like the proofs by Donald and Aceto, our proof also proceeds by means of Donaldson's theorem. However, their proofs rely on the work of Lisca \cite{MR2366190} which gives a detailed analysis on sums of linear lattices embedding in a full-rank lattice. We give a short proof of Theorem \ref{thm:bd_qhs1xd3} circumventing the reliance on Lisca's work. 
We obtain the additional equivalent condition \eqref{enum:bdposneg} in Theorem \ref{thm:bd_qhs1xd3}, since our method does not require the lattice embeddings to have full-rank.




Finally, we note that Theorem \ref{thm:sfsineq} also plays a key role in a forthcoming paper \cite{s4paper}, where we analyse which Seifert fibered spaces smoothly embed in $S^4$, and in particular, completely determine the Seifert fibered spaces $Y = S^2(e; r_1,\ldots, r_k)$ with $r_i \in \Q_{>1}$ for all $i$, $\varepsilon(Y) > 0$ and $e > k/2$ which smooothly embed in $S^4$.





In Section \ref{sec:defs}, we recall some standard facts and establish notation and conventions. In Section \ref{sec:embineq}, we prove the key technical theorem used to prove Theorem \ref{thm:sfsineq}. In Section \ref{sec:sfsineq}, we analyse when gluing compact $4$-manifolds with boundary results in a definite $4$-manifold and give a proof of Theorem~\ref{thm:sfsineq}. In Section \ref{sec:nz}, we prove Theorem \ref{thm:nz_conj} answering Neumann-Zagier's question, as well as prove Corollary \ref{cor:lidmantweedy}. Finally, in Section \ref{sec:bding_qhs1xd3} we prove Theorem~\ref{thm:bd_qhs1xd3} determining the Seifert fibered spaces which bound rational homology $S^1\times D^3$'s.

\subsection*{Acknowledgements}
The first author would like to thank Cameron Gordon for his support and encouragement, and Josh Greene for a helpful conversation on Neumann-Zagier's question.

\section{Preliminaries}\label{sec:defs}
In this section we briefly recall some standard facts about Seifert fibered spaces and intersection lattices, as well as establish notation and conventions. See \cite{MR518415} for a more indepth treatment on Seifert fibered spaces and plumbings.

Given $r \in \Q_{>1}$, there is a unique (negative) continued fraction expansion
$$r = [a_1, \ldots, a_{n}]^- := a_1 - \cfrac{1}{a_2 - \cfrac{1}{\begin{aligned}\ddots \,\,\, & \\[-3ex] & a_{n-1} - \cfrac{1}{a_{n}} \end{aligned}}},$$
where $n \ge 1$ and $a_i \ge 2$ are integers for all $i \in \{1, \ldots, n\}$. We associate to $r$ the weighted linear graph (or linear chain) given in Figure \ref{fig:linearchain}. We call the vertex with weight labelled by $a_i$ the $i$th vertex of the linear chain associated to $r$, so that the vertex labelled with weight $a_1$ is the first, or starting vertex of the linear chain.

\begin{figure}[h]
  \begin{overpic}[width=150pt]{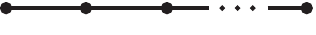}
    \put (0, 0) {$a_1$}
    \put (24, 0) {$a_2$}
    \put (50, 0) {$a_3$}
    \put (95, 0) {$a_n$}
  \end{overpic}
  \caption{Weighted linear chain representing $r = [a_1,\ldots,a_n]^-$.}
  \label{fig:linearchain}
\end{figure}

We denote by $Y = S^2(e; \frac{p_1}{q_1}, \ldots, \frac{p_k}{q_k})$ the Seifert fibered space over $S^2$ given in Figure \ref{fig:sfs_surgery}, where $e\in\Z$, and $\frac{p_i}{q_i} \in \Q$ is non-zero for all $i \in \{1,\ldots,k\}$. The generalised Euler invariant of $Y$ is given by $\varepsilon(Y) = e - \sum_{i=1}^k \frac{q_i}{p_i}$. Every Seifert fibered space $Y$ is (possibly orientation reversing) homeomorphic to one in standard form, i.e. such that $\varepsilon(Y) \ge 0$ and $\frac{p_i}{q_i} > 1$ for all $i \in \{1,\ldots,k\}$. We henceforth assume that $Y$ is in standard form. If $\varepsilon(Y) \neq 0$ then $Y$ is a rational homology sphere with $|H_1(Y)| = |p_1\cdots p_k \varepsilon(Y)|$, and if $\varepsilon(Y) = 0$ then $Y$ is a rational homology $S^1\times S^2$. 

\begin{figure}[h]
  \begin{overpic}[height=80pt]{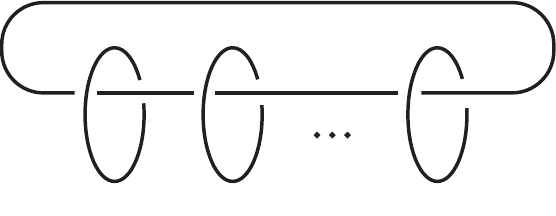}
    \put (-6, 29) {$e$}
    \put (17.5, 0) {$\frac{p_1}{q_1}$}
    \put (39.3, 0) {$\frac{p_2}{q_2}$}
    \put (76.3, 0) {$\frac{p_k}{q_k}$}
  \end{overpic}
  \caption{Surgery presentation for the Seifert fibered space $S^2(e; \frac{p_1}{q_1}, \ldots, \frac{p_k}{q_k})$.}
  \label{fig:sfs_surgery}
\end{figure}

For each $i\in\{1,\ldots, k\}$, we have the unique continued fraction expansion $\frac{q_i}{p_i} = [a_1^i, \ldots, a_{h_i}^i]^-$ where $h_i \ge 1$ and $a_j^i \ge 2$ are integers for all $j \in \{1,\ldots,h_i\}$. We associate to $Y = S^2(e; \frac{p_1}{q_1}, \ldots, \frac{p_k}{q_k})$ the weighted star-shaped graph in Figure \ref{fig:plumbing}. The $i$th leg of the star-shaped graph is the weighted linear subgraph for $p_i/q_i$ generated by the vertices labelled with weights $a_1^i, \ldots, a_{h_i}^i$. The degree $k$ vertex labelled with weight $e$ is called the central vertex.

\begin{figure}[h]
  \begin{overpic}[height=150pt]{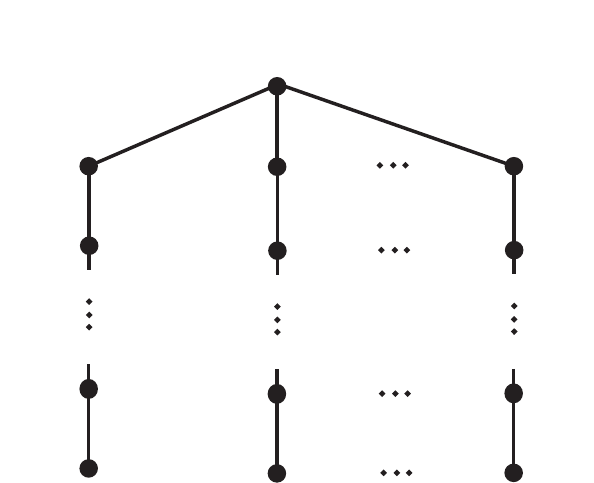}
    \put (45, 73) {\Large $e$}
    \put (4, 52) {\large $a_1^1$}
    \put (4, 38) {\large $a_2^1$}
    \put (2, 2) {\large $a_{h_1}^1$}
    
    \put (36, 52) {\large $a_1^2$}
    \put (36, 38) {\large $a_2^2$}
    \put (34, 2) {\large $a_{h_2}^2$}
    
    \put (90, 52) {\large $a_1^k$}
    \put (90, 38) {\large $a_2^k$}
    \put (90, 2) {\large $a_{h_p}^k$}    


  \end{overpic}
  \caption{The weighted star-shaped plumbing graph $\Gamma$.}
  \label{fig:plumbing}
\end{figure}

Let $\Gamma$ be either the weighted star-shaped graph for $Y$, or a disjoint union of weighted linear graphs. There is an oriented smooth $4$-manifold $X_\Gamma$ given by plumbing $D^2$-bundles over $S^2$ according to the weighted graph $\Gamma$. We denote by $|\Gamma|$ the number of vertices in $\Gamma$. Let $m = |\Gamma|$ and denote the vertices of $\Gamma$ by $v_1, v_2, \ldots, v_m$. The zero-sections of the $D^2$-bundles over $S^2$ corresponding to each of $v_1, \ldots, v_m$ in the plumbing together form a natural spherical basis for $H_2(X_\Gamma)$. With respect to this basis, which we call the vertex basis, the intersection form of $X_\Gamma$ is given by the weighted adjacency matrix $Q_\Gamma$ with entries $Q_{ij}$, $1 \le i,j \le m$ given by

$$Q_{ij} = \begin{cases} 
      \text{w}(v_i), & \mbox{if }i = j \\
      -1, & \mbox{if }v_i\mbox{ and }v_j\mbox{ are connected by an edge} \\
      0, & \mbox{otherwise} 
   \end{cases},
$$
where $\text{w}(v_i)$ is the weight of vertex $v_i$. Denoting by $Q_X$ the intersection form of $X$, we call $(H_2(X), Q_X) \cong (\Z^m, Q_\Gamma)$ the intersection lattice of $X_{\Gamma}$ (or of $\Gamma$). We denote the intersection pairing of two elements $x,y\in\Z^m$ by $x\cdot y = x^T\, Q_\Gamma\, y$. Now assume that $\Gamma$ is the star-shaped plumbing for $Y$. If $\varepsilon(Y) > 0$ then $X_\Gamma$ is a positive definite $4$-manifold and $\Gamma$ is the standard positive definite plumbing graph for $Y$. If $\varepsilon(Y) = 0$, then $X_\Gamma$ is a positive semi-definite manifold. 

Let $\iota : (\Z^m, Q_\Gamma) \rightarrow (\Z^r, \mbox{Id})$, $r > 0$, be a map of lattices, i.e. a $\Z$-linear map preserving pairings, where $(\Z^r, \mbox{Id})$ is the standard positive diagonal lattice. We call $\iota$ a lattice embedding if it is injective. We adopt the following standard abuse of notation. First, for each $i\in\{1,\ldots,m\}$, we identify the vertex $v_i$ with the corresponding $i$th basis element of $(\Z^m, Q_\Gamma)$. Moreover, we shall identify an element $v \in (\Z^m, Q_\Gamma)$ with its image $\iota(v) \in (\Z^r, \mbox{Id})$.

\section{The embedding inequality}\label{sec:embineq}
\label{sec:embineq}
In this section, we prove Theorem \ref{thm:lattice_ineq} below, which is the key technical result of this paper. In particular, it will be used in the next section to prove Theorem \ref{thm:sfsineq}. We begin with some continued fraction identities which we will need.

\begin{lemma}\label{lem:cont_frac_identities}
Let $\{a_i\}_{i\ge 1}$ be a sequence of integers with $a_i \ge 2$ for all $i$, and let $p_k/q_k = [a_1,\dots, a_k]^-$ for all $k \ge 1$. Then we have the following identities:
\begin{enumerate}[label=(\alph*)]
\item $q_n p_{n-1} - p_n q_{n-1}=1$ for all $n \ge 2$.
\item $[a_1, \dots, a_n,x]^- = \frac{xp_n-p_{n-1}}{xq_n - q_{n-1}}$, for all $n \ge 2$ and $x \in \R$ such that both sides are well defined.
\item $p_n= \det \begin{pmatrix}
    a_1 & -1    & 0 & 0 \\
    -1 & a_2 & -1 & 0 \\ 
 0  &-1  & \ddots & -1  \\
 0  & 0 & -1    &a_n      
\end{pmatrix}$ and $q_n= \det \begin{pmatrix}
    a_2 & -1    & 0 & 0 \\
    -1 & a_3 & -1 & 0 \\ 
 0  &-1  & \ddots & -1  \\
 0  & 0 & -1    &a_n      
\end{pmatrix}$ for all $n \ge 2$.
\end{enumerate}
\end{lemma}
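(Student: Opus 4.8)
The plan is to package the convergents of the negative continued fraction into $2\times 2$ integer matrices and to extract all three identities from a single matrix-product formula. Define
\[
M_i := \begin{pmatrix} a_i & -1 \\ 1 & 0 \end{pmatrix}, \qquad P_n := M_1 M_2 \cdots M_n \quad (n \ge 1),
\]
with the conventions $p_0 := 1$ and $q_0 := 0$. The Möbius transformation associated to $M_i$ is $z \mapsto (a_i z - 1)/z = a_i - 1/z$, so unwinding the continued fraction from the innermost entry outward gives
\[
[a_1,\ldots,a_n]^- = M_1\cdot\bigl(M_2\cdot(\cdots(M_n\cdot\infty))\bigr) = P_n\cdot\infty,
\]
the quotient of the two entries of the first column of $P_n$. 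Since $\det M_i = 1$ we have $\det P_n = 1$, so those two entries are coprime, and a one-line induction using $a_i\ge 2$ shows the lower-left entry of $P_n$ is strictly increasing and positive. Hence the first column of $P_n$ is exactly $(p_n,q_n)^T$, the reduced numerator and positive denominator of $[a_1,\ldots,a_n]^-$. Writing $P_n = P_{n-1}M_n$, the second column of $P_n$ is minus the first column of $P_{n-1}$, so by induction
\[
P_n = \begin{pmatrix} p_n & -p_{n-1} \\ q_n & -q_{n-1} \end{pmatrix}, \qquad n\ge 1;
\]
comparing entries in $P_n = P_{n-1}M_n$ simultaneously yields the recursions $p_n = a_np_{n-1}-p_{n-2}$ and $q_n = a_nq_{n-1}-q_{n-2}$.

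Granting this, the three parts are short. Part (a) is $q_np_{n-1}-p_nq_{n-1} = \det P_n = 1$. For part (b), appending a real number $x$ corresponds to applying the Möbius transformation $P_n$ to $x$ rather than to $\infty$, giving $[a_1,\ldots,a_n,x]^- = P_n\cdot x = (p_nx - p_{n-1})/(q_nx - q_{n-1})$; the denominator vanishes exactly when the right-hand side fails to be defined, which is the stated proviso. For part (c), let $D_n$ be the displayed tridiagonal determinant with diagonal $(a_1,\ldots,a_n)$; cofactor expansion along its last row gives $D_n = a_nD_{n-1} - D_{n-2}$, the very recursion satisfied by $p_n$, and $D_1 = a_1 = p_1$, $D_2 = a_1a_2 - 1 = p_2$, so $D_n = p_n$ for all $n$ by induction. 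The claim for $q_n$ is the identical computation on the $(n-1)\times(n-1)$ tridiagonal matrix with diagonal $(a_2,\ldots,a_n)$, whose first two values $a_2$ and $a_2a_3-1$ match $q_2$ and $q_3$.

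The only points that need any care are bookkeeping: pinning down the initial data $p_0=1$, $q_0=0$, checking the base case $P_1=M_1$ of the matrix induction against the claimed form, and confirming that the first column of $P_n$ is the \emph{reduced} fraction with positive denominator rather than a multiple of it --- all of which follow from $\det P_n = 1$ together with the monotonicity of $(q_n)_{n\ge 1}$. There is no substantive obstacle here; the content is entirely formal, resting on associativity of matrix multiplication and its compatibility with the Möbius action.
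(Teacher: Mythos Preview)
Your proof is correct and follows essentially the same route as the paper: both arguments encode the convergents via the product $M_{a_1}\cdots M_{a_n}$ with $M_a=\begin{pmatrix}a&-1\\1&0\end{pmatrix}$, read off the identity $P_n=\begin{pmatrix}p_n&-p_{n-1}\\q_n&-q_{n-1}\end{pmatrix}$, obtain (a) by taking determinants, (b) by applying the product to $x$, and (c) by a cofactor-expansion induction on the tridiagonal determinant. The only cosmetic differences are that you phrase things in M\"obius-action language and expand along the last row (matching the forward recursion $p_n=a_np_{n-1}-p_{n-2}$), whereas the paper expands along the first row; you are also a bit more explicit than the paper about why the first column of $P_n$ gives the \emph{reduced} fraction with positive denominator.
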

\begin{proof}
For $a\in\R$, let $M_a$ denote the matrix
$M_a=\begin{pmatrix}a & -1 \\ 1 & 0\end{pmatrix}$. If $q/r=[a_2,\dots, a_n]^-$, then $\frac{p_n}{q_n} = a_1 - \frac{r}{q}$. In particular, we have
\[
\begin{pmatrix}
p_n \\ q_n
\end{pmatrix}
= M_{a_1}
\begin{pmatrix}
q \\ r
\end{pmatrix}
\]
Thus, one can inductively show that
\begin{equation}\label{eq:cont_frac_matrix}
\begin{pmatrix}
p_n \\ q_n
\end{pmatrix}
= M_{a_1} \dotsb M_{a_n}
\begin{pmatrix}
1 \\ 0
\end{pmatrix},
\end{equation}
and furthermore that
\begin{equation}\label{eq:cont_frac_matrix2}
\begin{pmatrix}
p_n & -p_{n-1} \\ q_n & -q_{n-1}
\end{pmatrix}
= M_{a_1} \dotsb M_{a_n}.
\end{equation}
Identity $(a)$ follows by taking determinants of \eqref{eq:cont_frac_matrix2} and observing that $\det M_{a}=1$ for any $a$. Identity $(b)$ follows from combining \eqref{eq:cont_frac_matrix} and \eqref{eq:cont_frac_matrix2} to get
\[
M_{a_1} \dotsb M_{a_n} M_x \begin{pmatrix}
1 \\ 0\end{pmatrix} = \begin{pmatrix}
p_n & -p_{n-1} \\ q_n & -q_{n-1}
\end{pmatrix}\begin{pmatrix}
x \\ 1 
\end{pmatrix}= \begin{pmatrix}
xp_n -p_{n-1} \\ xq_n - q_{n-1}
\end{pmatrix}.
\]
The identities in $(c)$ can easily be proven by induction using the observation that
\[ \det \begin{pmatrix}
a_1 & -1    & 0 \\
-1  &\ddots &-1  \\
 0  & -1    &a_n      
\end{pmatrix} = a_1 \det \begin{pmatrix}
a_2 & -1    & 0 \\
-1  &\ddots &-1  \\
 0  & -1    &a_n      
\end{pmatrix} -
\det \begin{pmatrix}
a_3 & -1    & 0 \\
-1  &\ddots &-1  \\
 0  & -1    &a_n      
\end{pmatrix}.\]
\end{proof}

The following theorem is the key technical result of this paper.
\begin{restatable}{thm}{latticeineq}
  \label{thm:lattice_ineq}
  Let $\iota : (\Z^{|\Gamma|}, Q_\Gamma) \rightarrow (\Z^m, \mbox{Id})$ be a lattice embedding, where $m > 0$ and $\Gamma$ is a disjoint union of weighted linear chains representing fractions $\frac{p_1}{q_1}, \ldots, \frac{p_n}{q_n} \in \Q_{>1}$. Suppose that there is a unit vector $w \in (\Z^m, \mbox{Id})$ which pairs non-trivially with (the image of) the starting vertex of each linear chain. Then
  \[\sum_{i=1}^n \frac{q_i}{p_i} \leq 1.\]
 Moreover, if we have equality then $w$ has pairing $\pm 1$ with the starting vertex of each linear chain.
\end{restatable}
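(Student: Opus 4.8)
The plan is to set up a quantitative invariant attached to each linear chain that measures "how much" of the shared unit vector $w$ is used by that chain, and to show these quantities sum to at most $1$. Fix one linear chain, with vertices $u_1,\dots,u_h$ (the $i$th of the $n$ chains), representing $p/q = [a_1,\dots,a_h]^-$; write $p_j/q_j = [a_1,\dots,a_j]^-$ for the truncations, so $p_h/q_h = p/q$ and Lemma~\ref{lem:cont_frac_identities} applies. After composing with $\pm$ on $w$ we may assume $\langle u_1, w\rangle = c \ge 1$. I would try to build, greedily down the chain, a vector $x \in (\Z^m,\mathrm{Id})$ in the span of $u_1,\dots,u_h$ together with $w$, recording the coefficient of $w$; the key identity to establish (by induction on $j$, using parts (a) and (b) of Lemma~\ref{lem:cont_frac_identities}) is that one can choose $x$ so that $x\cdot x$ and the $w$-component of $x$ are controlled by the continued fraction data. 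Concretely, I expect the right statement to be: for the single chain there is a vector whose orthogonal projection away from $w$ has squared norm $\le$ (something like) $p/q$ worth of "room", forcing $\langle u_1,w\rangle^2 \cdot \tfrac{q}{p}$ to be bounded by the part of $\|w\|^2 = 1$ that this chain consumes.

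More precisely, here is the mechanism I would aim for. Consider the sublattice $L_i$ spanned by the vertices of the $i$th chain inside $(\Z^m,\mathrm{Id})$, and let $\pi$ denote orthogonal projection (over $\Q$) onto the orthogonal complement of $w$. Since the chains are disjoint in $\Gamma$, the vertices of distinct chains are orthogonal to each other in $(\Z^{|\Gamma|},Q_\Gamma)$, hence their images are orthogonal in $(\Z^m,\mathrm{Id})$; therefore the images of $\pi$ applied to distinct chains lie in mutually orthogonal subspaces, and $\sum_i \|\pi(\text{chain } i)\text{-contribution}\|^2 \le \|{\rm id} - \pi\|$-type bookkeeping gives $\sum_i (\text{that chain's share of } \langle\,\cdot\,,w\rangle^2) \le \langle w,w\rangle = 1$. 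The crux is then a purely one-chain computation: show that if $u_1,\dots,u_h$ is the embedded linear chain for $p/q$ and $\langle u_1,w\rangle = c$, then the component of $u_1$ along $w$ "costs" at least $c^2 q/p$ — equivalently, that the Gram matrix of $u_1,\dots,u_h, w$ forces $\tfrac{q}{p} \le \tfrac{(\text{$w$-share})}{c^2}$ and in particular $c^2 \tfrac{q}{p} \le (\text{$w$-share of chain }i)$.

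To nail the one-chain inequality I would argue by induction on the chain length $h$, peeling off the last vertex $u_h$: the vertex $u_h$ has square $a_h$ and is joined only to $u_{h-1}$, and $a_h - 1 \ge 1$ of its "unit-vector directions" are unused by the rest of the chain and by $w$ (here one uses that the embedding is into a \emph{diagonal} lattice, so each vertex is literally a sum of $\pm$ standard basis vectors and squares just count coordinates). Re-expressing $[a_1,\dots,a_h]^- = \frac{a_h p_{h-1}-p_{h-2}}{a_h q_{h-1}-q_{h-2}}$ via Lemma~\ref{lem:cont_frac_identities}(b) lets one run the induction: the inductive hypothesis applied to $[a_1,\dots,a_{h-1}]^-$ gives a bound in terms of $q_{h-1}/p_{h-1}$, and the determinant/continued-fraction identities (a) and (c) convert this into the bound in terms of $q_h/p_h$. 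Summing over $i$ then yields $\sum_i c_i^2\, q_i/p_i \le 1$ with $c_i = |\langle u_1^{(i)},w\rangle| \ge 1$, hence $\sum_i q_i/p_i \le \sum_i c_i^2 q_i/p_i \le 1$; equality forces every $c_i = 1$, which is exactly the addendum that $w$ pairs as $\pm 1$ with each starting vertex.

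I expect the main obstacle to be the one-chain base computation: correctly identifying the quantity that is additive across chains and proving the per-chain lower bound $c^2 q/p \le (\text{$w$-share})$ cleanly. The subtlety is that the embedded chain need not be "standard" (the $u_j$ can overlap in complicated ways in $\Z^m$), so one cannot just read off coordinates; the argument must be organized so that only the Gram relations $u_j\cdot u_j = a_j$, $u_j\cdot u_{j+1} = -1$, $u_j \cdot w \in \Z$ and the fact that everything sits in a positive diagonal lattice (so every Gram submatrix is that of an actual vector configuration, in particular positive semidefinite, and "extra" coordinates are genuinely available) are used. Getting the induction to interface with Lemma~\ref{lem:cont_frac_identities} in the right normalization is where the care is needed.
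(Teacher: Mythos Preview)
Your high-level framework is correct and is, in fact, exactly what the paper does: writing $S_i = \lVert \mathrm{proj}_{L_i}(w)\rVert^2 = w_k^T A_k^{-1} w_k$ for the squared norm of the orthogonal projection of $w$ onto the span $L_i$ of the $i$th chain, one has $\sum_i S_i \le \lVert w\rVert^2 = 1$ because the $L_i$ are pairwise orthogonal in $(\Z^m,\mathrm{Id})$. The paper packages this as $\det(M'M'^{T})\ge 0$, but it is the same inequality. (A small slip: your claim that the images of the chains under projection onto $w^{\perp}$ are mutually orthogonal is false --- if $u\in L_i$, $v\in L_j$ with $u\cdot v=0$, then $\pi(u)\cdot\pi(v) = -\,(u\cdot w)(v\cdot w)$, which need not vanish. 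The correct bookkeeping, which you also gesture at, is to project $w$ onto the $L_i$.)

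The genuine gap is the per-chain bound. Your proposed induction on the chain length, peeling off the last vertex $u_h$, does not close: projecting $w$ onto the shorter span $L' = \mathrm{span}(u_1,\dots,u_{h-1})$ gives $S' \le S$ and, by induction, $S' \ge c^2\, q_{h-1}/p_{h-1}$. But the truncations satisfy $q_{h-1}/p_{h-1} < q_h/p_h$ (for instance $[2]^- = 2$ versus $[2,2]^- = 3/2$, so $1/2 < 2/3$), so the inductive bound lies \emph{below} the target and the step does not go through. Your auxiliary claim that ``$a_h-1$ of the unit-vector directions of $u_h$ are unused by the rest of the chain and by $w$'' is also unjustified: nothing prevents $w$ from pairing nontrivially with several interior vertices, and the induction must confront exactly those extra pairings. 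Relatedly, the per-chain inequality is \emph{false} for arbitrary integer vectors of pairings $(\langle u_j,w\rangle)_j$ with nonzero first entry; the embedding constraint is essential, and any argument has to use it.

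What the paper does instead is the missing idea: rather than inducting, it \emph{truncates} each chain at the first vertex after $u_1$ where $w$ pairs nontrivially. This forces the vector $w_k$ of pairings to have the shape $(u,0,\dots,0,v)$ (or a special $(\pm 1,\mp 1,0,\dots,0,v)$ variant when $a_1=2$), so that the quantity $Q_k/P_k = w_k^T (A_k')^{-1} w_k$ can be computed explicitly as $(u^2 s_l + 2uv + v^2 r_{l-1})/r_l$ via Lemma~\ref{lem:cont_frac_identities}. A short estimate then shows $Q_k/P_k \ge q_k/p_k$, with equality only when $u=\pm 1$ and $v=0$. Since truncation only shrinks the span, $S_k \ge Q_k/P_k$, and summing gives the theorem and its equality clause. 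Your framework is right; the truncation-and-compute step is what makes the per-chain inequality provable.
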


\begin{proof} Let $\{e_1,\ldots, e_m\}$ denote the orthonormal basis of coordinates vectors of $(\Z^m, \mbox{Id})$. Since the unit vectors in $(\Z^m, \mbox{Id})$ are precisely those vectors of the form $\pm e_i$ where $i\in\{1,\ldots,m\}$, by a change of basis if necessary, we may assume that $w = e_1 \in (\Z^m, \mbox{Id})$. Write $\iota : (\Z^{|\Gamma|}, Q_\Gamma) \rightarrow (\Z^m, \mbox{Id})$ as an integer matrix with respect to the vertex basis of $(\Z^{|\Gamma|}, Q_\Gamma)$, and let $M$ be the transpose of this matrix. Since $\iota$ preserves intersection pairings we have, $u^T Q_\Gamma v = \iota(u)^T \iota(v) = (M^T u)^T (M^T v) = u^T MM^T v$ for all $u, v \in (\Z^{|\Gamma|}, Q_\Gamma)$. Thus,
\[MM^T= Q_\Gamma =
\begin{pmatrix}
A_1 & 0 & 0 \\
0	& \ddots & 0 \\
0	& 	0	 & A_n
\end{pmatrix},
\]
where for each $k \in \{1,\ldots,n\}$, $A_k$ on the diagonal represents a block matrix of the form
\[
A_k = \begin{pmatrix}
    a_1 & -1    & 0 & 0 \\
    -1 & a_2 & -1 & 0 \\ 
 0  &-1  & \ddots & -1  \\
 0  & 0 & -1    &a_l
\end{pmatrix}
\]
where $[a_1, \dots, a_l]^-$ is the continued fraction expansion for $p_k/q_k$. If a matrix $A$ can be written as a product $M' M'^T$, then\footnote{Let $v\ne 0$ be an eigenvector of $A$ with eigenvalue $\lambda$. We have
$v^T A v = \lambda\norm{v}= \norm{M'^Tv}\geq 0,$ thus $\lambda \geq 0$. Since $\det A$ is a product of eigenvalues, this implies that $\det A \geq 0$ as required.}
\begin{equation}\label{eq:posdet}
  \det A \geq 0.
\end{equation}
We will prove the theorem by applying \eqref{eq:posdet} to a matrix of the form $A=M'M'^T$, where $M'$ is a suitable modification of $M$.

We may write $M$ in the form
\[M=
\begin{pmatrix}
M_1 \\
\vdots \\
M_n
\end{pmatrix}\]
where for all $k\in\{1,\ldots,n\}$, $M_k$ is a matrix such that $M_k M_k^T = A_k$. By the assumption that $e_1$ pairs non-trivially with each of the starting vertices of the linear chains, we may assume that each matrix $M_k$ is non-zero in its top left entry. For each $k\in\{1,\ldots,n\}$, let $p_k/q_k = [a_1,\dots, a_l]^-$ be the standard continued fraction expansion and choose $M_k'$ to be the submatrix of $M_k$ obtained by taking the first $l_k$ rows, where $l_k$ is chosen so that the first column $w_k$ of $M_k'$ takes one of the two forms:
\begin{enumerate}[label=\textnormal{(Form \arabic*)}]
\item \label{enum:form_1} $w_k = \begin{pmatrix}u & 0 & \cdots & 0 & v \end{pmatrix}^T$,
 where $l_k > 2$ or $a_1 > 2$, and $v = 0$ only if $M_k' = M_k$.
\item \label{enum:form_2} $w_k = \pm \begin{pmatrix}1 & -1 & \cdots & 0 & v \end{pmatrix}^T$, where $a_1 = 2$, and $v = 0$ only if $M_k' = M_k$.
\end{enumerate}

Let $M'$ be the matrix
\[M'=
\begin{pmatrix}
1 \, 0 \, \cdots\, 0 \\
 M_1' \\
 \vdots \\
 M_n'
\end{pmatrix}.
\]
Then the product $A= M' M'^T$ takes the form of the block matrix
\[M'M'^T=
\begin{pmatrix}
1   & w_1^T &       & w_n^T \\
w_1 & A_1'   &         & 0 \\
 	&       & \ddots  & \\
w_n	& 	0   &	 		& A_n'
\end{pmatrix}.
\]

\begin{claim}
$\det A$ can be written in the form
\[
\det A = (P_1 \dotsb P_n) (1- \sum_{i=1}^n \frac{Q_i}{P_i}),
\]
where $P_k = \det A_k'$ and
$Q_k=-\det
\begin{pmatrix}
0   & w_k^T \\
w_k & A_k' 
\end{pmatrix}$ is a quantity depending only on $A_k$ and $w_k$. 
\end{claim}
\begin{proof}
By the multi-linearity of the determinant in both rows and columns we have
\[
\det
\begin{pmatrix}
1   & w_1^T &  \hdots       & w_n^T \\
w_1 & A_1'   &         & 0 \\
\vdots 	&       & \ddots  & \\
w_n	& 	0   &	 		& A_n'
\end{pmatrix}
=
\det
\begin{pmatrix}
1   & 0 &  \hdots       & 0 \\
0 & A_1'   &         & 0 \\
\vdots 	&       & \ddots  & \\
0	& 	0   &	 		& A_n'
\end{pmatrix}
+ \sum_{1\leq i,j \leq n}
\det B_{ij},
\]
where $B_{ij}$ is the matrix
\[ B_{ij}=
\begin{pmatrix}
0      &\dotsb &  w_j^T &  \dotsb & 0 \\
\vdots & A_1'  &        &         &   \\
w_i    &       & \ddots &   0     &    \\
\vdots &       &   0    & \ddots  &    \\
0	   & 	   &	 	&         & A_n'
\end{pmatrix}.
\]
By cofactor expansion in the first row then first column, it is not hard to check that $\det B_{ij} = 0$ for all $i\ne j$. For $i\in\{1,\ldots,n\}$, by row and column operations, we can put $B_{ij}$ into the form of a diagonal block matrix with diagonal blocks $\begin{pmatrix}
0   & w_k^T \\
w_k & A_k' 
\end{pmatrix}$, $A_1', \ldots, A_{i-1}', A_{i+1}', \ldots, A_n'$ without changing the determinant. Hence, $\det B_{ii}$ is the product of the determinants of these blocks, that is, $\det B_{ii} = -(P_1 \dotsb P_n) \frac{Q_i}{P_i}$. 
\end{proof}
Since $P_k>0$ for all $k$, the previous claim combined with $\det{A} \ge 0$ (see \eqref{eq:posdet}) shows that
\begin{equation}\label{eqn:qpIneq}
\sum_{i=1}^n \frac{Q_i}{P_i} \leq 1.
\end{equation}
So to prove the inequality in the theorem it suffices to show that
$Q_k / P_k \geq q_k /p_k$ for each $k \in \{1,\ldots,n\}$. To do this it suffices to consider some fixed $k \in \{1,\ldots,n\}$. For convenience, let $P/Q=P_k/Q_k$ and $p/q = p_k/q_k = [a_1, a_2, \ldots, a_h]^-$ where $a_i \ge 2$ for all $i \in \{1,\ldots, h\}$, and let $l = l_k$ be the number of rows of $A'_k$.

Consider the following identity obtained by adding the second row to the first row, and the second column to the first column:
\[\det\begin{pmatrix}
0      & -1 & 1   & \dotsb & v \\
-1     & 2  & -1  &	       & 0\\
1      & -1 & a_2 & -1     &  \\
\vdots &    &-1   & \ddots &-1  \\
v      & 0  &     & -1     &a_l
\end{pmatrix}
=
\det\begin{pmatrix}
0      & 1  & 0   & \dotsb & v \\
1      & 2  & -1  &	       & 0\\
0      & -1 & a_2 & -1     &  \\
\vdots &    &-1   & \ddots &-1  \\
v      & 0  &     & -1     &a_l
\end{pmatrix}.
\]
Recall that $w_k$ takes one of two possible forms. By applying the above identity if $w_k$ takes the form \ref{enum:form_2}, we see that regardless of the form that $w_k$ takes, $Q$ is equal to the determinant of a matrix of the following form
\begin{align}\label{eq:gen_form}Q=-\det \begin{pmatrix}
0      & u  & 0   & \dotsb & v \\
u      & a_1  & -1  &	       & 0\\
0      & -1 & a_2 & -1     &  \\
\vdots &    &-1   & \ddots &-1  \\
v      & 0  &     & -1     &a_l
\end{pmatrix},
\end{align}
where if $(u,v) = (\pm1,\mp 1)$ then either $l > 2$ or $a_1 > 2$. If $w_k$ takes the form \ref{enum:form_2}, we define $u\in\{\pm 1\}$ via Equation \eqref{eq:gen_form} by applying the identity.

For $i\in\{1,\ldots,h\}$, let $r_i/s_i$ denote the continued fraction $[a_1, \dots, a_i]^-$.
Note that $P=r_l$.
\begin{claim}
\[Q= u^2 s_l + 2uv + v^2 r_{l-1}\]
\end{claim}
\begin{proof} Applying cofactor expansion along the first column and first row in (\ref{eq:gen_form}) gives
$$Q = u^2 C_1 + (-1)^{l+1} uv C_2 + (-1)^{l+1} uv C_3 + v^2 C_4,$$ where
$C_1 = \det \begin{pmatrix}
a_2  & -1  &  \cdots    & 0\\
-1 & a_3 & -1     &  \\
\vdots    &-1   & \ddots &-1  \\
0  &     & -1     &a_l
\end{pmatrix}$,
$C_2 = \det \begin{pmatrix}
-1  & a_2  & -1	&  \cdots     & 0\\
0 & -1 & a_3  & -1  &  \\
\vdots    &  & -1  & \ddots &-1  \\
  &   &  & -1     &a_{l-1} \\
0 & & & & -1
\end{pmatrix}$, \newline
$C_3 = \det \begin{pmatrix}
-1  & 0  & 	&  \cdots   & 0\\
a_2 & -1 &   &   &  \\
-1 & a_3 & -1  & &  \\
  & -1  & \ddots & -1  & \\
0 & & & a_{l-1} & -1
\end{pmatrix}$
and
$C_4 = \det \begin{pmatrix}
a_1  & -1  & \cdots  & 0\\
-1 & a_2 & -1     &  \\
\vdots    &-1   & \ddots &-1  \\
0  &     & -1     &a_{l-1}
\end{pmatrix}.$

Using the continued fraction identities in Lemma \ref{lem:cont_frac_identities}, we see that $C_1 = s_l$ and $C_4 = r_{l-1}$. Finally, notice that $C_2$ (resp. $C_3$) is the determinant of an upper (resp. lower) triangular matrix with $l-1$ diagonal entries all of which are $-1$, hence $C_2 = C_3 = (-1)^{l-1}$.
\end{proof}
\begin{claim}
We have $\frac{Q}{P} \geq \frac{q}{p}$ with equality only if $u=\pm 1$ and $v=0$.
\end{claim}
\begin{proof}
Recall that if $v=0$ then $r_l/s_l = p/q$. Thus if $v=0$, $Q/P = u^2q/p$. Since $u\ne 0$, we clearly have $Q/P \geq q/p$ with equality only if $u^2=1$, as required.
Thus assume that $v\ne 0$. In this case, if $l = h$, or equivalently, $p/q = r_l/s_l$ then $P = p$ and $Q = u^2 s_l + 2 u v + v^2 r_{l-1} > s_l = q$ and thus $Q/P > q/p$. Hence, we assume that $p/q\ne r_l/s_l$ and, in particular, that $p/q= [a_1, \dots, a_l,x]^-$ where $x = [a_{l+1}, \ldots, a_h]^- >1$. Thus, by Lemma \ref{lem:cont_frac_identities} we have
\[
\frac{p}{q}= \frac{xr_l - r_{l-1}}{xs_l -s_{l-1}}.
\]
Note that since $u$ and $v$ are both non-zero we have that
\[ Q= (s_l-1)u^2 + (u+v)^2 + (r_{l-1}-1)v^2 \geq s_l + r_{l-1} -\varepsilon,\]
where we take $\varepsilon =2$ if $u=-v \in \{\pm 1\}$ and $\varepsilon=1$ otherwise. Note that if $r_{l-1}=2$, then $\varepsilon=1$, as $l=a_1=2$ implies we cannot have $u=-v \in \{\pm 1\}$ by the condition stated immediately following Equation \eqref{eq:gen_form}. In either case we always have
\[r_{l-1}-\varepsilon \geq 1.\]
Thus we obtain
\begin{align}\begin{split}\label{eq:frac_bound}
\frac{Q}{P}-\frac{q}{p}
&\geq \frac{s_l+r_{l-1}-\varepsilon}{r_l} -\frac{xs_l-s_{l-1}}{xr_l-r_{l-1}}\\  
&=\frac{(r_{l-1}-\varepsilon)(xr_l-r_{l-1})+r_l s_{l-1} - s_l r_{l-1}}{r_l(xr_l - r_{l-1})}\\
&=\frac{(r_{l-1}-\varepsilon)(xr_l-r_{l-1})-1}{r_l(xr_l -r_{l-1})}\\
&\geq \frac{(xr_l-r_{l-1})-1}{r_l(xr_l -r_{l-1})}\\
&>0,
\end{split}\end{align}
where we used the identity $r_l s_{l-1} - s_l r_{l-1}=-1$ from Lemma \ref{lem:cont_frac_identities} to obtain the third line, $r_{l-1}-\varepsilon\geq 1$ to obtain the fourth line, and finally that $xr_l-r_{l-1}>1$ which follows by combining $r_l \ge r_{l-1} + 1$ and $x > 1$. This gives the desired inequality, proving the claim.
\end{proof}
The claim together with (\ref{eqn:qpIneq}) proves that $\sum_{i=1}^n \frac{q_i}{p_i} \le 1$ with equality only if $w = e_1$ has pairing $\pm 1$ with each starting vertex, which completes the proof.
\end{proof}

\section{Definite $4$-manifolds and the Seifert fibered space inequality}\label{sec:sfsineq}
Now we consider when gluing two 4-manifolds can result in a closed definite 4-manifold.
\begin{prop}\label{thm:def_gluing_thm}
Let $U_1$ and $U_2$ be 4-manifolds with $\partial U_1 = -\partial U_2 =Y$. Then the closed 4-manifold $X=U_1 \cup_Y U_2$ is positive definite if and only if
\begin{enumerate}[label=(\alph*)]
\item\label{enum:injcond} the inclusion-induced map $(i_1)_* \oplus (i_2)_* \colon H_1(Y; \Q) \rightarrow H_1(U_1; \Q)\oplus H_1(U_2; \Q)$ is injective and
\item\label{enum:sigeq} for $i=1,2$, $U_i$ has the maximal possible signature, that is, \[\sigma(U_i) = b_2(U_i) + b_1(U_i) - b_3(U_i) - b_2(Y).\]
\end{enumerate}  
\end{prop}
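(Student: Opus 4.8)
The plan is to analyze the Mayer--Vietoris sequence for $X = U_1 \cup_Y U_2$ over $\Q$ together with the behaviour of the intersection form on $H_2(X;\Q)$, so that positive-definiteness of $X$ translates directly into the two numerical/homological conditions \ref{enum:injcond} and \ref{enum:sigeq}. First I would write down the relevant portion of the Mayer--Vietoris sequence
\[
H_2(Y;\Q) \to H_2(U_1;\Q)\oplus H_2(U_2;\Q) \xrightarrow{j} H_2(X;\Q) \xrightarrow{\partial} H_1(Y;\Q) \xrightarrow{(i_1)_*\oplus(i_2)_*} H_1(U_1;\Q)\oplus H_1(U_2;\Q),
\]
and record that $b_2(X) = \dim\operatorname{im}(j) + \dim\operatorname{im}(\partial)$, where $\dim\operatorname{im}(\partial) = b_1(Y) - \operatorname{rank}\big((i_1)_*\oplus(i_2)_*\big)$. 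The key linear-algebra input is that the classes coming from $\operatorname{im}(\partial)$ span a subspace on which the intersection form of $X$ vanishes (they are represented by cycles pushed off into a collar of $Y$, so they have zero self-intersection and pair trivially with each other), while the restriction of $Q_X$ to $\operatorname{im}(j)$ is, up to the radical, the orthogonal sum of the intersection forms of $U_1$ and $U_2$. This is standard and I would cite the usual Novikov-additivity-type computation: if $X$ is definite then necessarily $\operatorname{im}(\partial) = 0$, giving \ref{enum:injcond}, and the signature satisfies $\sigma(X) = \sigma(U_1) + \sigma(U_2)$ with $\sigma(X) = b_2(X)$ iff both $U_i$ are "as positive as possible."

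More precisely, for the forward direction I would argue: since $Q_X$ is positive definite, any subspace on which it is identically zero must be trivial; the image $\operatorname{im}(\partial)$ is such a subspace, hence $\partial = 0$, hence $(i_1)_*\oplus(i_2)_*$ is injective, which is \ref{enum:injcond}. Then $b_2(X) = \operatorname{rank}(j)$ and, using that $Q_X|_{\operatorname{im}(j)}$ has radical of dimension $b_2(X) - \sigma(X)$ coming precisely from classes in the boundary (again a standard fact, using $H_2(Y;\Q)$ mapping in), positive-definiteness forces this radical to be trivial, so $\sigma(X) = b_2(X)$ and by Novikov additivity $\sigma(U_1)+\sigma(U_2) = b_2(X)$. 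Combining this with the Euler-characteristic bookkeeping $b_2(X) = b_2(U_1)+b_2(U_2) + b_1(Y) - \dim\operatorname{im}(\partial) - (\text{correction from }H_2(Y))$ and the fact that each $\sigma(U_i)$ is bounded above by $b_2(U_i) + b_1(U_i) - b_3(U_i) - b_2(Y)$ (this upper bound being itself a consequence of the half-lives-half-dies principle for $\partial U_i = Y$), I would show each inequality must be an equality, which is \ref{enum:sigeq}. For the converse, assuming \ref{enum:injcond} and \ref{enum:sigeq}, the same Mayer--Vietoris computation runs backwards: $\partial = 0$, so $b_2(X) = b_2(U_1) + b_2(U_2) - b_2(Y) + (\text{rank of } H_2(Y)\to H_2(U_1)\oplus H_2(U_2))$, and \ref{enum:sigeq} forces $\sigma(X) = b_2(X)$, i.e.\ $Q_X$ is positive definite.

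The main obstacle I anticipate is the careful bookkeeping of the ranks of the maps into and out of $H_2(Y;\Q)$ and $H_1(Y;\Q)$: one must use Poincar\'e--Lefschetz duality on each $U_i$ (the "half-lives-half-dies" statement that $\ker\big(H_1(Y;\Q)\to H_1(U_i;\Q)\big)$ has dimension $\tfrac12 b_1(Y) = \tfrac12 b_2(Y)$, and the corresponding statement relating $\operatorname{im}(H_2(U_i)\to H_2(U_i,Y))$ to these kernels) in order to identify exactly which classes lie in the radical of $Q_X$ and to verify that the upper bound for $\sigma(U_i)$ in \ref{enum:sigeq} is the sharp one. I would isolate this as a lemma: the radical of $Q_X$ restricted to $\operatorname{im}(j)$ is spanned by the images of $H_2(Y;\Q)$, and its dimension equals $\sum_i\big(b_2(U_i) + b_1(U_i) - b_3(U_i) - b_2(Y) - \sigma(U_i)\big) \ge 0$ with the total radical of $Q_X$ having dimension $\dim\operatorname{im}(\partial)$ plus this. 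Once that lemma is in place, both directions of the proposition follow by setting the two manifestly non-negative contributions to the corank of $Q_X$ equal to zero.
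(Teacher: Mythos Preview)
Your overall ingredients---Mayer--Vietoris, Novikov additivity, and the signature upper bound for each $U_i$---are exactly the ones the paper uses, and these suffice. However, the geometric shortcut you propose for the forward direction contains a genuine error.

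You claim that ``the classes coming from $\operatorname{im}(\partial)$ span a subspace on which the intersection form of $X$ vanishes,'' because ``they are represented by cycles pushed off into a collar of $Y$.'' This does not typecheck: $\operatorname{im}(\partial)$ lies in $H_1(Y;\Q)$, not in $H_2(X;\Q)$. If instead you mean a complement to $\operatorname{im}(j)$ in $H_2(X;\Q)$, such classes \emph{cannot} be represented by cycles supported in a collar $Y\times[-1,1]$: any class supported there comes from $H_2(Y;\Q)$ and hence lies in $\operatorname{im}(j)$ and has $\partial=0$. So there is no isotropic subspace of the sort you describe, and indeed since $X$ is closed the form $Q_X$ is nondegenerate by Poincar\'e duality, so it has no radical at all. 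The related remark that $Q_X|_{\operatorname{im}(j)}$ has a ``radical of dimension $b_2(X)-\sigma(X)$'' is likewise unjustified.

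The paper avoids these issues by arguing purely numerically. From the long exact sequence of $(U_i,Y)$ one finds that the image of $H_2(Y;\Q)\to H_2(U_i;\Q)$ has rank $b_2(Y)-b_1(U_i)+b_3(U_i)$; since this image lies in the radical of $Q_{U_i}$, one gets $\sigma(U_i)\le b_2(U_i)+b_1(U_i)-b_3(U_i)-b_2(Y)$. Separately, the tail of the Mayer--Vietoris sequence gives $b_1(U_1)+b_1(U_2)\le b_1(Y)+b_1(X)$, with equality iff \ref{enum:injcond} holds. Taking the Euler characteristic of the Mayer--Vietoris sequence yields a closed formula for $b_2(X)$, and summing the two signature bounds together with Novikov additivity produces
\[
b_2(X)-\sigma(X)\;\ge\;2\bigl(b_1(X)+b_2(Y)-b_1(U_1)-b_1(U_2)\bigr)\;\ge\;0,
\]
with equality throughout iff both \ref{enum:injcond} and \ref{enum:sigeq} hold. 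This is the entire argument; no separate geometric step is needed to extract \ref{enum:injcond}, and the half-lives-half-dies principle for $H_1$ is not required (the relevant radical computation takes place in $H_2$, via the long exact sequence of the pair).
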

\begin{proof}
In this proof all homology groups will be taken with rational coefficients. First, for $i=1,2$, consider the following segment of the long exact sequence in homology of the pair $(U_i, Y)$:
\begin{equation}
0\rightarrow H_3(U_i) \rightarrow H_3(U_i,Y) \rightarrow H_2(Y) \rightarrow H_2(U_i)
\end{equation}
By exactness and Lefschetz duality this shows that the rank of the map $H_2(Y) \rightarrow H_2(U_i)$ is $b_2(Y)-b_1(U_i) +b_3(U_i)$. Homology classes in the image of $H_2(Y) \rightarrow H_2(U_i)$ pair trivially with all classes in $H_2(U_i)$. This gives an upper bound on the signature of $U_i$:
\begin{equation}\label{eq:sigbound}
\sigma(U_i) \leq b_2(U_i) + b_1(U_i) - b_3(U_i) - b_2(Y).
\end{equation}
Now consider the segment of the Mayer-Vietoris sequence
\begin{align}\label{eq:MV}
\begin{split}
0\rightarrow & H_3(U_1)\oplus H_3(U_2) \rightarrow H_3(X) \rightarrow H_2(Y) \rightarrow H_2(U_1)\oplus H_2(U_2) \rightarrow \\ \rightarrow &H_2(X) \rightarrow H_1(Y) \rightarrow H_1(U_1)\oplus H_1(U_2) \rightarrow H_1(X) \rightarrow 0.
\end{split} \end{align}
The last three terms in this sequence show that
\begin{equation}\label{eq:b1_bound}
b_1(U_1)+b_1(U_2)\leq b_1(Y)+b_1(X),
\end{equation}
with equality if and only if the map induced by the inclusions 
\[(i_1)_* \oplus (i_2)_* \colon H_1(Y) \rightarrow H_1(U_1)\oplus H_1(U_2)\]
is injective.

Since the Euler characteristic of an exact sequence is zero, \eqref{eq:MV} shows that 
\begin{equation}\label{eq:b2_formula}
b_2(X)=2b_1(X) + \sum_{i=1}^2 (b_2(U_i)-b_1(U_i) - b_3(U_i)),
\end{equation}
where we also used that $b_1(Y)=b_2(Y)$ and $b_1(X)=b_3(X)$.

By Novikov additivity, we have that $\sigma(X)=\sigma(U_1)+\sigma(U_2)$. So by summing the inequalities in \eqref{eq:sigbound} for $i=1,2$ and comparing with \eqref{eq:b2_formula} we obtain
\begin{equation}\label{eq:b2_bound}
  b_2(X) \ge 2(b_1(X) + b_2(Y) - b_1(U_1) - b_1(U_2)) + \sigma(X),
\end{equation}
with equality if and only if we have equality in \eqref{eq:sigbound} for both $i=1,2$.
Hence, $X$ can be positive definite if and only if 
\begin{equation}\label{eq:b1_bound2}
b_1(U_1)+b_1(U_2)= b_2(Y)+b_1(X).
\end{equation}
and we have equality in \eqref{eq:sigbound} for $i=1,2$. However we have already seen that equality occurs in \eqref{eq:b1_bound} if and only if $(i_1)_* \oplus (i_2)_*$ is injective. 
%
%
\end{proof}

This allows us to prove the main theorem.

\sfsineq*
\begin{proof} Let $X$ be the standard positive (semi-)definite plumbing $4$-manifold with $\partial X = Y$, and let $Z = X \cup_{Y} -W$. It follows from Proposition \ref{thm:def_gluing_thm} that $Z$ is positive definite. To see that condition \ref{enum:sigeq} of Proposition \ref{thm:def_gluing_thm} holds, note that from the homology long exact sequence for $(Y, W)$ we get
  \begin{equation}\label{eq:lesp}
    0 \rightarrow H_1(Y;\Q) \rightarrow H_1(W;\Q) \rightarrow H_1(W,Y;\Q) \rightarrow 0,
  \end{equation}
  where we used that $H_1(Y;\Q) \rightarrow H_1(W;\Q)$ is injective. Taking the Euler characteristic of \eqref{eq:lesp} and applying Lefschetz duality shows that $b_3(W) - b_1(W) + b_2(Y) = 0$. Thus, $Z$ is a smooth positive definite $4$-manifold, so by Donaldson's theorem $Z$ has standard positive diagonal intersection form. The inclusion $X \subset Z$ induces a map $H_2(X) \rightarrow H_2(Z)$ which preserves the intersection pairing. Thus, there is a $\Z$-linear map $(H_2(X), Q_X) \rightarrow (\Z^m, \mbox{Id})$ for some $m > 0$, which preserves intersection pairings.

  We construct a partition of $\{1,2,\ldots,k\}$ into at most $e$ classes as follows. Denote the orthonormal basis of coordinate vectors of $(\Z^m, \mbox{Id})$ by $\{e_1,\ldots,e_m\}$. For $v \in (H_2(X), Q_X)$, we call $\{e_i : 1\le i\le m, e_i \cdot v \neq 0\}$ the support of $v$. Without loss of generality, we may assume that the central vertex has support $\{e_1,e_2,\ldots, e_n\}$ where $n \le e$. Let $v_1,v_2,\ldots, v_k$ be the vertices of the plumbing adjacent to the central vertex, so that $v_i$ is a vertex belonging to the $i$th leg of the plumbing graph (with fraction $\frac{p_i}{q_i}$). For $i\in\{1,\ldots, n\}$, let $B_i = \{ 1 \le j \le k \mid v_j \cdot e_i \neq 0 \}$ and define $B_0 = \emptyset$. Let $C_i = B_i \backslash \cup_{j < i} B_j$ for $i \in \{1,\ldots,n\}$. Then $C_1,\ldots, C_n$ are disjoint and $\cup_i C_i = \{1,\ldots,k\}$. Thus the non-empty classes $\{C_i : C_i \neq \emptyset\}$ form a partition of $\{1,2,\ldots,k\}$ into at most $e$ classes. By definition for each $i \in \{1,2,\ldots, n\}$, the starting vertices of the linear chains indexed by $C_i$ all have support containing the common unit vector $e_i$. Hence, by Theorem \ref{thm:lattice_ineq}, we have that $\sum_{j \in C_i} \frac{q_j}{p_j} \le 1$.
\end{proof}
\section{Neumann-Zagier's question}\label{sec:nz}
We prove Theorem \ref{thm:nz_conj} below which, when combined with Donaldson's theorem, immediately implies Theorem \ref{thm:nz_simple}. Note that the following theorem also positively answers Neumann-Zagier's question stated in the introduction.


\begin{restatable}{thm}{nzconjthm} \label{thm:nz_conj} Let $Y = S^2(e; \frac{p_1}{q_1}, \ldots, \frac{p_k}{q_k})$, $k \ge 3$, be in standard form, that is, with $\frac{p_i}{q_i} > 1$ for all $i\in\{1,2,\ldots,k\}$, $e > 0$ and with $Y$ bounding a smooth positive definite plumbing $X$. Suppose that $|H_1(Y)| \in \{1,2,3,5,6,7\}$ and the intersection lattice $(H_2(X), Q_X)$ embeds into a positive standard diagonal lattice. Then $e = 1$. \end{restatable}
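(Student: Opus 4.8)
The plan is to turn the embedding hypothesis into the combinatorial conclusion of Theorem~\ref{thm:sfsineq}, rewrite $|H_1(Y)|$ in terms of the resulting partition, and then exclude $e \ge 2$ by playing the restriction $|H_1(Y)| \in \{1,2,3,5,6,7\}$ off against elementary estimates on the Seifert invariants. First I would record that since $X$ is positive definite we have $\varepsilon(Y) > 0$, so $Y$ is a rational homology sphere and $|H_1(Y)| = \varepsilon(Y)\, p_1\cdots p_k$. The hypothesis supplies a pairing-preserving embedding $(H_2(X), Q_X) \hookrightarrow (\Z^m, \mbox{Id})$, and feeding this into the argument in the proof of Theorem~\ref{thm:sfsineq} in place of the map $H_2(X) \to H_2(Z)$ produced there yields a partition of $\{1,\dots,k\}$ into nonempty classes $C_1,\dots,C_t$ with $t \le e$ (and $t \le k$) such that $\sigma_j := \sum_{i\in C_j} q_i/p_i \le 1$ for each $j$. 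Concretely: the central vertex has square $e$, so its support $\{e_1,\dots,e_n\}$ has $n \le e$; for each $e_i$ one collects the legs whose initial vertex pairs nontrivially with $e_i$ but with none of $e_1,\dots,e_{i-1}$, and applies Theorem~\ref{thm:lattice_ineq} to that collection of linear chains.

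Next I would rewrite $|H_1(Y)|$. From $\varepsilon(Y) = (e-t) + \sum_{j=1}^t (1-\sigma_j)$ and the quantity
\[
M_j := \Big(\prod_{i\in C_j} p_i\Big)(1-\sigma_j) = \prod_{i\in C_j} p_i - \sum_{l\in C_j} q_l \prod_{i\in C_j\setminus\{l\}} p_i \ \in\ \Z_{\ge 0},
\]
one obtains $|H_1(Y)| = (e-t)\, p_1\cdots p_k + \sum_{j=1}^t \big(\prod_{i\notin C_j} p_i\big) M_j$, where not every $M_j$ vanishes because $\varepsilon(Y) > 0$. The facts I would use are: $p_1\cdots p_k \ge 2^k \ge 8$ since $k \ge 3$; $\prod_{i\notin C_j} p_i \ge 2^{\,k-|C_j|}$; $M_j \ge 1$ whenever $|C_j| = 1$ (because $1 \le q_l < p_l$); and, if $|C_j| = 2$ with $C_j = \{a,b\}$, then $M_j = 0$ forces $q_a/p_a + q_b/p_b = 1$, hence $p_a = p_b$ using $\gcd(p_a,q_a) = \gcd(p_b,q_b) = 1$, i.e.\ $C_j$ indexes a complementary pair $\tfrac{p}{q},\tfrac{p}{p-q}$.

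Now assume $e \ge 2$, for contradiction. If $t < e$ then $|H_1(Y)| \ge p_1\cdots p_k \ge 8$, contradicting $|H_1(Y)| \le 7$; so $t = e$, hence $e \le k$ and every $C_j$ is proper. If some $C_j$ is a singleton and $k \ge 4$, then $|H_1(Y)| \ge 2^{k-1} \ge 8$. If $k = 3$ then $e \in \{2,3\}$: for $e = 3$ all three classes are singletons and $|H_1(Y)| \ge 3 \cdot 2^2 = 12$; for $e = 2$ the partition is a singleton together with a pair, and writing $|H_1(Y)| = p_1 p_2(p_3 - q_3) + p_3 M$ with $M$ the $M_j$ of the pair, a short finite check using $2 \le p_i$, $1 \le q_i < p_i$, $\gcd(p_i,q_i) = 1$ shows $|H_1(Y)| \in \{4\} \cup \{n : n \ge 8\}$. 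If no $C_j$ is a singleton, choose $j_0$ with $M_{j_0} \ge 1$; the other $e-1$ classes have $\ge 2$ legs each, so $\prod_{i\notin C_{j_0}} p_i \ge 2^{2(e-1)}$, which is $> 7$ once $e \ge 3$, while for $e = 2$ either both $M_j \ge 1$ (so $|H_1(Y)| \ge 2^{|C_1|} + 2^{|C_2|} \ge 8$) or, say, $M_2 = 0$, in which case $C_2$ has $\ge 3$ legs (so $|H_1(Y)| \ge 8$) or exactly $2$ legs forming a complementary pair $p_a = p_b$ (so $|H_1(Y)| = p_a^2 M_1 \in \{4\} \cup \{n : n \ge 8\}$). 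In every case $|H_1(Y)| \notin \{1,2,3,5,6,7\}$, a contradiction, so $e = 1$.

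I expect the $e = 2$ cases to be the only real obstacle. The arithmetic there genuinely realizes the value $|H_1(Y)| = 4$ — for instance $Y = S^2(2; 2, 2, 2)$, or $S^2(2; 2, 3, 2, 2)$ — which is precisely why the hypothesis excludes $4$ from the list of allowed orders; ruling out the remaining values $5,6,7$ rests on recognizing a $2$-element class with $\sigma_j = 1$ as a complementary pair and then carrying out the small, but slightly fiddly, finite check (the parity obstruction $p_1 p_2 - q_1 p_2 - q_2 p_1 \not\equiv 1 \pmod 2$ when $p_1 = p_2 = 2$ does most of the work).
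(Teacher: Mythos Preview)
Your proposal is correct and follows essentially the same approach as the paper: derive the partition from Theorem~\ref{thm:sfsineq}, rewrite $|H_1(Y)|$ as $(e-t)\,p_1\cdots p_k + \sum_j M_j \prod_{i\notin C_j} p_i$, use $p_1\cdots p_k \ge 8$ to force $t=e$, and then eliminate $e\ge 2$ by elementary estimates on the remaining terms. The only difference is organizational: the paper isolates a single non-complementary class of size $\le k-2$ (so its term is a product of at least two $p_i\ge 2$, hence composite) and then argues that $|H_1(Y)|$ is a sum of integers $\ge 2$ with at least one composite summand, reducing directly to the decompositions $7=3+4$ and $6=2+4=6$; you instead split on whether some class is a singleton, on $k=3$ versus $k\ge 4$, and on $e=2$ versus $e\ge 3$. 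Both routes unwind to the same small finite checks, and your parenthetical remarks about complementary pairs forcing $p_a=p_b$ and about the parity of $p_1p_2 - q_1p_2 - q_2p_1$ when $p_1=p_2=2$ are exactly what the paper uses (more tersely) to dispatch the residual cases.
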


\begin{proof}[Proof of Theorem \ref{thm:nz_conj}]

For sake of contradiction, assume that $e > 1$. We may apply Theorem \ref{thm:sfsineq}, noting that the existence of $W$ in the hypothesis of Theorem \ref{thm:sfsineq} is only required to ensure that there is a map of lattices of $(H_2(X), Q_X)$ into a positive standard diagonal lattice. Hence, there is a partition $\{C_1,\ldots, C_n\}$ of $\{1,\ldots,k\}$ into $n \le e$ classes. Moreover, for each class $C$, $1 - \sum_{i \in C} \frac{q_i}{p_i} \ge 0$, and we call $C$ \emph{complementary} if equality occurs, and \emph{non-complementary} otherwise.
  
 We have 
  \begin{align}
    |H_1(Y)| &= p_1\cdots p_k \cdot \varepsilon(Y) = p_1\cdots p_k (e - \sum_{i=1}^k \frac{q_i}{p_i}) \nonumber \\
             &= p_1 \cdots p_k \left( (e - n) + \sum_{i=1}^n (1 - \sum_{j \in C_i} \frac{q_j}{p_j}) \right) \nonumber \\
             &= p_1 \cdots p_k (e - n) + \sum_{i=1}^n a_i \prod_{\substack{1 \le l \le k\\ l \not\in C_i}} p_l, \label{eqn:h1decomp}
  \end{align}
  where $a_i = (\prod_{j \in C_i} p_j) \cdot (1 - \sum_{l\in C_i} \frac{q_i}{p_i})$ is an integer for all $i \in \{1,\ldots,n\}$.
  Notice that all terms in \eqref{eqn:h1decomp} are non-negative integers. Since we are assuming that $|H_1(Y)| \in \{1,2,3,5,6,7\}$, we must have $n = e$, otherwise $|H_1(Y)| \ge p_1 \cdots p_k \ge 2\cdot 2\cdot 2 = 8$ since $k \ge 3$.
  
We claim that $|C_i| \le k-2$ for some $i \in \{1,\ldots,e\}$ with $C_i$ non-complementary. To see this, we argue as follows. There are $n = e \ge 2$ classes in the partition, and at least one non-complementary class since $|H_1(Y)| > 0$. If there are two non-complementary classes then at least one has size at most $k-2$ since $k \ge 3$. If there is only one non-complementary class, then there is a complementary class which necessarily has size at least $2$, and hence the non-complementary class satisfies the claim.

Combining the above claim with \eqref{eqn:h1decomp}, we see that $|H_1(Y)|$ is a sum of integers greater than $1$, and at least one of these integers is not prime. For $|H_1(Y)| \in \{1,2,3,5,6,7\}$, this is only possible for $|H_1(Y)| = 7$ with decomposition $7 = 3 + 2 \cdot 2$, and for $|H_1(Y)| = 6$ with the two decompositions $|H_1(Y)| = 2 + 2\cdot 2 = 2\cdot 3$. We address these cases in turn.
For $|H_1(Y)| = 7 = 3 + 2 \cdot 2$, comparing this decomposition with \eqref{eqn:h1decomp}, we see that there must exist some non-complementary $C_i$ with $|C_i| = 2$ and $p_j = 2$ for all $j \in C_i$. However, such a $C_i$ must be complementary since $1 - \frac{1}{2} - \frac{1}{2} = 0$, a contradiction. A similar argument rules out the decomposition $|H_1(Y)| = 2 + 2\cdot 2$. Finally, in the case $|H_1(Y)| = 2\cdot 3$, the decomposition implies that there exists a complementary class $C_i = \{a,b\}$ with $p_a = 2$ and $p_b = 3$, which is impossible.

\end{proof}
We obtain the following corollary, answering a question of Lidman-Tweedy \cite[Remark 4.3]{1707.09648}.

\coroldinv*
\begin{proof} We prove the contrapositive. Assume that $d(Y) = 0$. Note that reversing the orientation of $Y$ simply changes the sign of the weight of the central vertex in the definite plumbing bounding $Y$. Thus, by reversing the orientation of $Y$ if necessary we assume that $Y$ bounds a smooth negative definite plumbing $X^4$. Let $\mathcal{C} = \{\xi \in H_2(X;\Z) \mid \xi \cdot v = v \cdot v \pmod{2} \mbox{ for all }v\in H_2(X;\Z)\}$ be the set of characteristic vectors, and let $n = \mbox{rk}(H_2(X))$. Elkies \cite{MR1338791} proved that
  $0 \le n + \max_{\xi \in \mathcal{C}} \xi \cdot \xi$, with equality if and only if $Q_X$ is diagonalizable over $\Z$. However, it follows from \cite[Theorem 9.6]{MR1957829} that $n + \max_{\xi \in \mathcal{C}} \xi \cdot \xi \le 4d(Y) = 0$. Therefore $Q_X$ is diagonalizable over $\Z$, in particular $(H_2(-X), Q_{-X})$ embeds into a positive standard diagonal lattice. Hence, Theorem \ref{thm:nz_conj} implies that $|e| = 1$.
\end{proof}

\section{Seifert fibered spaces bounding rational homology $S^1 \times D^3$'s}\label{sec:bding_qhs1xd3}
In this section we prove Theorem \ref{thm:bd_qhs1xd3}, which in particular gives a classification of the Seifert fibered spaces which smoothly bound rational homology $S^1\times D^3$'s. We note that the implication \eqref{enum:bdqhs1d3} implies \eqref{enum:comppairs} was proved by Donald \cite[Proof of Theorem 1.3]{MR3271270}, and the equivalence of \eqref{enum:comppairs} and \eqref{enum:bdqhs1d3} was shown by Aceto \cite[Theorem 1.2]{1502.03863}.

\paolosthm*

\begin{proof} 
  First suppose that \eqref{enum:comppairs} holds, that is, $Y = S^2(k; \frac{p_1}{q_1}, \frac{p_1}{p_1-q_1}, \ldots, \frac{p_k}{q_k}, \frac{p_k}{p_k - q_k})$, where $k \ge 0$ and $\frac{p_i}{q_i}\in \Q_{>1}$ for all $i\in\{1,\ldots,k\}$. By Rolfsen twisting, $Y$ can be put into the form $S^2(0; \frac{p_1}{q_1}, -\frac{p_1}{q_1}, \ldots, \frac{p_k}{q_k}, -\frac{p_k}{q_k})$. Let $M = S^2(0; \frac{p_1}{q_1}, \frac{p_2}{q_2}, \ldots, \frac{p_k}{q_k})$, let $M^{\circ}$ be the $3$-manifold with torus boundary given by removing a tubular neighbourhood of a regular fiber of $M$ and let $W = M \times [0, 1]$. Then $\partial W = M^\circ \cup_{\partial} -M^\circ$ is the double of $M^\circ$, which is precisely $Y$. Finally, notice that $H_*(W; \Q) = H_*(M^{\circ}; \Q) = H_*(S^1 \times D^3; \Q)$, where the last equality follows from the fact that $M$ is a rational homology $S^3$ and $M^\circ$ is obtained by removing a neighbourhood of a simple closed curve from $M$. This proves \eqref{enum:bdqhs1d3}.
  
  The implication \eqref{enum:bdqhs1d3} implies \eqref{enum:bdposneg} holds by taking $W_{\pm} = W$ and noting that $H_1(Y;\Q) \rightarrow H_1(W;\Q)$ is injective by the long exact sequence of the pair $(Y, W)$.
  
  Finally assume that \eqref{enum:bdposneg} holds. Hence, $Y$ is the boundary of smooth $4$-manifolds $W_+$ and $W_-$ satisfying $\sigma(W_\pm) = \pm b_2(W_\pm)$ and such that the inclusion induced maps $H_1(Y) \rightarrow H_1(W_\pm)$ are injective. Write $Y$ as $S^2(e; \frac{p_1}{q_1}, \ldots, \frac{p_k}{q_k})$ with $k \ge 3$ and $\frac{p_i}{q_i} \in \Q_{>1}$ for all $i\in\{1,\ldots,k\}$. Notice that $-Y = S^2(k-e; \frac{p_1}{p_1 - q_1}, \ldots, \frac{p_k}{p_k - q_k})$, and $Y$ is of the form given in \eqref{enum:comppairs} if and only if $-Y$ is of this form.
  Thus, by reversing the orientations of both $Y$ and $W_{\pm}$ if necessary, we may assume that $e \ge \frac{k}{2}$.

  By Theorem \ref{thm:sfsineq}, there is a partition $\{C_1,\ldots, C_n\}$ of $\{1,\ldots,k\}$ into $n \le e$ classes such that for each class $C$, $1 - \sum_{i \in C} \frac{q_i}{p_i} \ge 0$. Since $Y$ is a rational homology $S^1\times S^2$, we thus have
    \begin{align*}
      0 &= p_1\cdots p_k \cdot \varepsilon(Y) = p_1\cdots p_k (e - \sum_{i=1}^k \frac{q_i}{p_i}) \\
                 &= p_1 \cdots p_k \left( (e - n) + \sum_{i=1}^n (1 - \sum_{j \in C_i} \frac{q_j}{p_j}) \right),
    \end{align*}
    where all terms in the sum are non-negative. Hence, we must have $n = e$ and $1 - \sum_{i \in C} \frac{q_i}{p_i} = 0$, for all $i\in\{1,\ldots,n\}$. This implies that $|C_i| \ge 2$ for all $i\in\{1,\ldots,n\}$. Thus, there are at least $2n = 2e$ legs, so $e \le \frac{k}{2}$. However, by assumption $e \ge \frac{k}{2}$ so $e = \frac{k}{2}$ and $|C_i| = 2$ for all $i\in\{1,\ldots,n\}$. Thus, $C_1,\ldots,C_n$ partition $\{1,\ldots,k\}$ into pairs of indices indexing pairs of fractions of the form $\frac{p}{q}, \frac{p}{p-q} \in \Q_{>1}$, and thus \eqref{enum:comppairs} holds.
\end{proof}
\phantomsection
\bibliography{references}{}
\bibliographystyle{alpha}

\end{document}